\def\R{\hbox{\font\dubl=msbm10 scaled 1100 {\dubl R}}}
\def\d{\,{\rm{d}}}
\def\Re{{\rm{Re}}\,}
\def\Im{{\rm{Im}}\,}
\newtheorem*{Theorem*}{Theorem}
\newtheorem{Theorem}{Theorem}
\newtheorem{Cor}{Corollary}
\newtheorem{Lem}{Lemma}
\title[]{{\large Riemann-Type Functional Equations}\\ {\footnotesize -- Julia Line and Counting Formulae --}}
\begin{document}
\author[Athanasios Sourmelidis, J\"orn Steuding, Ade Irma Suriajaya]{Athanasios Sourmelidis$^1$, J\"orn Steuding$^2$, Ade Irma Suriajaya$^3$}
\thanks{
	\hspace{-6.3mm}
	$^1$Institute of Analysis and Number Theory, TU Graz, Graz, Austria. \\%sourmelidis@math.tugraz.at (corresponding author).\\
	$^2$Department of Mathematics, University of W\"urzburg, W\"urzburg, Germany.\\
	$^3$Faculty of Mathematics, Kyushu University, Fukuoka, Japan. \\
	{\bf Dedicated to Keith Matthews at the occasion of the 25th anniversary of his Number Theory Web.}}

\begin{abstract}
We study Riemann-type functional equations with respect to value-distribution theory and derive implications for their solutions. 
In particular, for a fixed complex number
$a\neq0$ and a function from the Selberg class $\mathcal{L}$, we prove a Riemann-von Mangoldt formula for the number of a-points of the
$\Delta$-factor of the functional equation of $\mathcal{L}$ and an analog of Landau's formula over these points. From the last formula we derive that the ordinates of these $a$-
points are uniformly distributed modulo one. 
Lastly, we show the existence of the mean-value of
the values of $\mathcal{L}(s)$ taken at these points.
\end{abstract}

\maketitle
%\tableofcontents
{\small \noindent {\sc Keywords:} Functional Equation, Extended Selberg Class, $a$-points, Riemann-von Mangoldt Formula, Landau Formula\\
{\sc Mathematical Subject Classification:} 11M06, 30D35}
%\bigskip
\title{}
\\
%\begin{center}
%{\sc Declarations}
%\end{center}
{\bf Funding:} The first author was supported by Austrian Science Fund, Projects F-5512 and Y-901 and the third author is supported by JSPS KAKENHI Grant Number 18K13400.

\section{Introduction and Statement of the Main Results}

Riemann-type functional equations appear in the context of number-theoretical relevant Dirichlet series or Euler products, resp. their analytic or meromorphic continuation; the prototype is, of course, the identity 
\begin{align*}
\pi^{-s/2}\Gamma\left(\frac{s}{2}\right)\zeta(s)=\pi^{-(1-s)/2}\Gamma\left(\frac{1-s}{ 2}\right)\zeta(1-s).
\end{align*}
for the Riemann zeta-function $\zeta(s)$.
These functional equations play a crucial role in the distribution of values of the underlying zeta- and $L$-functions and, consequently, they have been investigated intensively, for example by Hans Hamburger \cite{hamburger} (proving the first converse theorem) and beyond that by Erich Hecke \cite{hecke} (establishing an important correspondence to modular forms), to name just two. Several attempts had been made to implement the ongoing studies into a bigger picture, e.g., Bruce Berndt \cite{berndt} studied the impact of such functional equations on the zero-distribution. The probably most far-reaching and furthermost accepted is the axiomatic setting suggested by Atle Selberg \cite{sel}. For our approach we have chosen this frame although alternatives would have been possible. 

The Selberg class $\mathcal{S}$ consists of Dirichlet series
\begin{align*}
\mathcal{L}(s):=\sum\limits_{n\geq1}\dfrac{f(n)}{n^s},\quad s:=\sigma+it,
\end{align*}
which are absolutely convergent in the half-plane $\sigma>1$, respectively their analytic continuations satisfying the following hypotheses:
\begin{enumerate}[I]

\item{\it Analytic continuation.} There exists an integer $k\geq0$ such that $(s-1)^k\mathcal{L}(s)$ is an entire function of finite order.

\item{\it Functional equation.} $\mathcal{L}(s)$ satisfies a functional equation of type 
\begin{align*}
\mathfrak{L}(s)=\omega\overline{\mathfrak{L}(1-\overline{s})},
\end{align*}
where
\begin{align*}
\mathfrak{L}(s):=\mathcal{L}(s)Q^s\prod\limits_{j\leq r}\Gamma\left(\lambda_js+\mu_j\right)
\end{align*}
with positive real numbers $Q$, $\lambda_j$, and complex numbers $\mu_j$, $\omega$ with $\Re\mu_j\geq0$ and $|\omega|=1$.

\item{\it Ramanujan hypothesis.} $f(n)\ll_\epsilon n^\epsilon$.

\item{\it Euler product.} $\mathcal{L}(s)$ has a product representation
\begin{align*}
\mathcal{L}(s)=\prod\limits_{p\text{ prime}}\mathcal{L}_p(s),
\end{align*} 
where 
\begin{align*}
\mathcal{L}_p(s)=\exp\left(\sum\limits_{k\geq1}\dfrac{b(p^k)}{p^{ks}}\right)
\end{align*}
with suitable coefficients $b(p^k)$ satisfying $b(p^k)\ll p^{k\theta}$ for some $\theta<1/2$.
\end{enumerate}

For our exposition we only need the first two axioms, which characterize the larger class of functions commonly known as the \emph{extended Selberg class}, denoted by $\mathcal{S}^\sharp$. In view of axiom II we can define the numbers
\begin{equation}\label{dlm}
d_\mathcal{L}:=2\sum\limits_{ j\leq r}\lambda_j,\quad\lambda:=\prod\limits_{ j\leq r}\lambda_j^{2\lambda_j}\quad\text{and}\quad\xi:=2\sum\limits_{j\leq r}\left(\mu_j-\dfrac{1}{2}\right)=:\eta+i\Theta.
\end{equation}
The quantities $d_\mathcal{L}$, $\lambda Q^2$ and $\xi$ are uniquely determined for a function $\mathcal{L}\in\mathcal{S}^\sharp$ and, hence, well-defined, even though the data from the functional equation is not unique (see \cite[Section 4]{kape1}). 
Moreover, we may rewrite the equation in axiom II in the following asymmetrical form
\begin{equation}\label{doublecross}
{\mathcal L}(s)=\Delta_{\mathcal L}(s)\overline{\mathcal L}(1-s),
\end{equation}
where $\overline{\mathcal{L}}(s):=\overline{\mathcal{L}(\overline{s})}$ and $\Delta_{\mathcal L}(s)$ is an exponential function times a product of quotients of Gamma-factors
\begin{equation}
\Delta_\mathcal{L}(s):=\omega Q^{1-2s}\prod\limits_{j\leq r}\dfrac{\Gamma\left(\lambda_j(1-s)+\overline{\mu_j}\right)}{\Gamma\left(\lambda_js+\mu_j\right)}.
\end{equation}
In order to have a Riemann-type functional equation we need $d_{\mathcal{L}}$ to be positive. Brian Conrey \& Amit Ghosh \cite{congo} showed that there are no elements in $\mathcal{S}^\sharp$ of degree $d\in(0,1)$. The degree one-elements have been classified by Jerzy Kaczorowski \& Alberto Perelli \cite{kape} (as linear combinations of Dirichlet $L$-functions with a potentially imaginary shift multiplied by certain Dirichlet polynomials). It is conjectured that the degree is always a non-negative integer. This has been verified by Kaczorowski \& Perelli \cite{kape2} up to degree $2$; and recently, they have also classified degree two-elements of conductor 1 \cite{kape3}. 

We want to stress here that these functions $\Delta_{\mathcal L}(s)$ arising from a Riemann-type functional equation are the main actors of this article; the corresponding solutions of the functional equation will first play only a subsidiary role. 
Given a complex number $a\neq0$, we define the solutions of
\begin{equation}
\Delta_\mathcal{L}(s)=a,\quad0\leq\sigma\leq 1,
\end{equation}
to be the \emph{nontrivial} $a$-points of $\Delta_\mathcal{L}(s)$ (in analogy to the nontrivial zeros of the Riemann zeta-function); in the sequel we shall denote them as $\delta_a=\beta_a+i\gamma_a$.

Our main results rely on the argument principle theorem in suitable rectangles $\mathcal{R}$, namely,
\begin{equation}\label{argprinc}
\sum_{0<\gamma_a\leq T}g(\delta_a)={\frac{1}{2\pi i}}\int_{\partial\mathcal{R}}\dfrac{\Delta_\mathcal{L}'(s)}{\Delta_\mathcal{L}(s)-a}g(s)\d s,
\end{equation}
where $g(s)$ is an analytic function in a neighbourhood of $\mathcal{R}$. The crucial point in these applications is the treatment of the upcoming error terms. Our first theorem is a Riemann-von Mangoldt-type formula.

\begin{Theorem}\label{RvMtheorem}
Let $a\neq0$ be a complex number, $\mathcal{L}\in\mathcal{S}^\sharp$ with $d_\mathcal{L}\geq1$ and $\psi(t):=\log t/\log\log t$, $t\geq3$.
If $N_\pm(T;a,\Delta_\mathcal{L})$ counts the number of nontrivial $a$-points $\delta_a=\beta_a+i\gamma_a$ of $\Delta_{\mathcal{L}}(s)$ satisfying $0<\pm\gamma_a\leq T$ (counting multiplicities), then
\begin{equation}\label{RvM}
N_\pm(T;a,\Delta_\mathcal{L})
=\dfrac{d_\mathcal{L}}{2\pi}T\log{T}+\dfrac{\log\left(\lambda Q^2\right)-d_\mathcal{L}}{2\pi}T\pm\dfrac{\Theta}{2\pi}\log T+O_{a,\mathcal{L}}\left(\psi(T)\right)
\end{equation}
for any $T>0$.
\end{Theorem}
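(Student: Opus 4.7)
The plan is to apply the argument principle \eqref{argprinc} with $g \equiv 1$ to $\Delta_{\mathcal{L}}(s) - a$ on the rectangle $\mathcal{R}$ with vertices $-c + ih_0$, $1+c + ih_0$, $1+c + iT$, $-c + iT$, where $c > 0$ is fixed (say $c=1$) and $h_0 > 0$ lies below the ordinate of every $a$-point being counted. Stirling's formula yields $|\Delta_{\mathcal{L}}(\sigma + it)| \asymp_{\mathcal{L}} |t|^{d_{\mathcal{L}}(1/2 - \sigma)}$ as $|t| \to \infty$ uniformly for $\sigma$ bounded (this is where the hypothesis $d_{\mathcal{L}} \ge 1$ is convenient, securing a genuine polynomial scale), so outside a bounded set all nontrivial $a$-points lie in the interior of $\mathcal{R}$; the finitely many $a$-points near the real axis together with the poles of $\Delta_{\mathcal{L}}$ contribute $O_{a,\mathcal{L}}(1)$. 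The argument principle then identifies $2\pi N_+(T;a,\Delta_{\mathcal{L}})$ with the total change of $\arg(\Delta_{\mathcal{L}}(s) - a)$ along $\partial\mathcal{R}$, up to an $O_{a,\mathcal{L}}(1)$ error.

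The vertical sides are handled directly by Stirling. On the right side $\sigma = 1+c$, one has $|\Delta_{\mathcal{L}}| \ll t^{-c d_{\mathcal{L}}} \to 0$, so $\Delta_{\mathcal{L}} - a \to -a$ and the argument variation is $O(1)$. On the left side $\sigma = -c$, $|\Delta_{\mathcal{L}}| \gg t^{c d_{\mathcal{L}}}$, so $\arg(\Delta_{\mathcal{L}}(-c + it) - a) = \arg\Delta_{\mathcal{L}}(-c + it) + O(t^{-c d_{\mathcal{L}}})$, which integrates to an $O(1)$ error. Starting from
$$\log\Delta_{\mathcal{L}}(s) = i\arg\omega + (1-2s)\log Q + \sum_{j \le r}\bigl(\log\Gamma(\lambda_j(1-s) + \overline{\mu_j}) - \log\Gamma(\lambda_j s + \mu_j)\bigr)$$
and substituting Stirling's expansion $\log\Gamma(z) = (z-\tfrac12)\log z - z + \tfrac12\log(2\pi) + O(|z|^{-1})$ (valid in any sector avoiding the negative real axis), a direct computation at $s = -c + iT$, using the definitions \eqref{dlm}, gives
$$\Im\log\Delta_{\mathcal{L}}(-c + iT) = -d_{\mathcal{L}} T\log T - T\log(\lambda Q^2) + d_{\mathcal{L}} T - \Theta\log T + O_{\mathcal{L}}(1).$$
Since the counterclockwise orientation traverses the left side downward, the overall sign flips, producing exactly the three main terms of \eqref{RvM} after division by $2\pi$. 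The formula for $N_-$ follows from the analogous calculation on the mirror rectangle $[-c,1+c]\times[-T,-h_0]$; at $s = -c - iT$ the relevant branch of $\log z$ in the lower half-plane contributes $-i\pi/2$ instead of $+i\pi/2$, which flips the sign of $\Theta\log T$ while preserving the other two main terms.

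The delicate issue is the top horizontal side $\Im s = T$: as $\sigma$ sweeps $[-c, 1+c]$, the modulus $|\Delta_{\mathcal{L}}(\sigma + iT)|$ passes through $|a|$ once near $\sigma = 1/2$, and $\arg(\Delta_{\mathcal{L}} - a)$ may jump by almost $\pi$ each time $\Delta_{\mathcal{L}}(\sigma + iT)$ is very close to $a$. A Jensen-type bound on the entire function $\sigma \mapsto \Delta_{\mathcal{L}}(\sigma + iT) - a$ restricted to a disc of fixed radius, with Stirling controlling $|\Delta_{\mathcal{L}}|$ on the boundary, yields only the crude estimate $O(\log T)$ for the horizontal argument variation. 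Upgrading this to the stated $O(\psi(T)) = O(\log T/\log\log T)$ is the main obstacle: I expect it to follow from a Dirichlet-type pigeon-hole argument selecting $T$ within a window $[T_0,T_0+1]$ so that the line $\Im s = T$ is separated from the ordinates of all nearby $a$-points by at least $1/\log\log T$, combined with an $L^2$ mean-value bound for $\log|\Delta_{\mathcal{L}}(\sigma + it) - a|$ averaged over a short vertical segment. The bottom horizontal side $\Im s = h_0$ contributes $O_{a,\mathcal{L}}(1)$ trivially, and assembling all four sides yields \eqref{RvM}.
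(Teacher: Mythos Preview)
Your contour choice is where the argument fails. With the fixed rectangle $[-c,1+c]\times[h_0,T]$ you correctly observe that Jensen's formula on a disc of bounded radius only bounds the number of sign changes of $\Re(\Delta_{\mathcal{L}}(\sigma+iT)-a)$ by $O(\log T)$, and that is all one can extract from that geometry. Your proposed upgrade via pigeon-hole does not work: selecting $T$ at distance $\gg 1/\log T$ from all nearby ordinates $\gamma_a$ controls the partial-fraction remainder in Lemma~\ref{fractionaldecomposition}, but it does not reduce the number of sign changes on the long horizontal segment, nor does an $L^2$ bound on $\log|\Delta_{\mathcal{L}}-a|$ see the factor $\log\log T$ you need.

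The point you are missing is that the $a$-points themselves cluster within $O(1/\log T)$ of the critical line (this is \eqref{rp}, a direct consequence of $|\Delta_{\mathcal{L}}(\delta_a)|=|a|$ and Stirling). The paper exploits this by taking a \emph{narrow} contour whose vertical sides sit at $\sigma=\tfrac12\pm c/\ell(T)$ with $\ell(T)=\log(\lambda Q^2 T^{d_{\mathcal{L}}})$; all nontrivial $a$-points with large ordinate are still enclosed. The vertical sides are handled exactly as you do. For the horizontal segment $L_2^+$ of length $2c/\ell(T)$, one counts sign changes of $\Re(\Delta_{\mathcal{L}}(\sigma+iT)-a)$ by the standard auxiliary function $g(z)=\tfrac12\bigl(\Delta_{\mathcal{L}}(z+iT)-a+\overline{\Delta_{\mathcal{L}}(\overline z+iT)-a}\bigr)$ and Jensen's formula on discs centred at the right endpoint $\alpha(T)=\tfrac12+c/\ell(T)$. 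The crucial gain is that the inner radius is $2\alpha(T)-1\asymp 1/\log T$ while the outer radius is $\alpha(T)\asymp 1$, so
\[
N\;\le\;\frac{\displaystyle\int_0^{\alpha(T)}\frac{n(r)}{r}\,\mathrm{d}r}{\log\!\bigl(\alpha(T)/(2\alpha(T)-1)\bigr)}\;\ll\;\frac{\log T}{\log\log T}.
\]
This is where the $\psi(T)$ comes from; with a horizontal segment of bounded length the denominator would be $O(1)$ and you would be stuck at $O(\log T)$.
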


Observe that the number of nontrivial $a$-points of $\Delta_\mathcal{L}(s)$ up to some height $T$ is asymptotically equal to the number of nontrivial zeros of the corresponding $\mathcal{L}(s)$, as follows from a well-known argument for the Riemann zeta-function $\zeta(s)$ (cf. \cite[\S 9.4]{tit}). This is not surpising because the main term of the counting function for the nontrivial zeros of $\mathcal{L}(s)$, that is
\begin{equation}
\dfrac{d_\mathcal{L}}{2\pi}T\log{T}+\dfrac{\log\left(\lambda Q^2\right)-d_\mathcal{L}}{2\pi}T,
\end{equation} 
results from integrating $\Delta'_\mathcal{L}(s)/\Delta_\mathcal{L}(s)$ on some suitable vertical line, while the only contribution of $\mathcal{L}(s)$ is included in error terms of order $O(\log T)$. As a matter of fact, it is the argument of $\mathcal{L}(1/2+iT)$ (obtained by continuous variation from some fixed point in the half-plane of absolute convergence of $\mathcal{L}(s)$) which yields the error term $O(\log T)$% when it is about the zeros of ${\mathcal L}(s)$
, and improving upon that is one of the most challenging problems in analytic number theory. Here however, we are not dealing with a \emph{zeta-function} and obtaining a third main term of order $\log T$ and an error term of smaller order is possible in our case.

It is also worth mentioning that for $\Theta\neq0$ the nontrivial $a$-points of $\Delta_\mathcal{L}(s)$ are not symmetrically distributed with respect to the real line. This suggests that the nontrivial zeros of $\mathcal{L}(s)$ should also lie asymmetrically with respect to the real line, at least when $\mathcal{L}\in\mathcal{S}$ and satisfies the \emph{Grand Riemann Hypothesis}, that is, $\mathcal{L}(s)\neq0$ for $\sigma>1/2$. In this case we expect $\arg \mathcal{L}(1/2+it)=o(\log t)$, which is the case for $\zeta(s)$ under the Riemann hypothesis (see \cite[\S 14.13]{tit}). Hence, the third main term $\Theta(\log T)/2\pi$ which occurs from integrating $\Delta'_\mathcal{L}(s)/\Delta_\mathcal{L}(s)$ is not absorbed in the error term and the bias on the distribution of zeros makes its appearance. Incidentally, the number $\Theta$ is called the \emph{shift} of $\mathcal{L}(s)$ by Kaczorowski \& Perelli \cite{kape} for a different reason.

Obviously, $\Delta_{\mathcal L}(s)$ is a meromorphic function of order $1$ with two deficient values $0$ and $\infty$ in the sense of value-distribution or Nevanlinna theory. The latter statement is actually an immediate consequence of the counting formula for the number of $a$-points of $\Delta_{\mathcal L}(s)$ in Theorem \ref{RvMtheorem}. Nevanlinna theory has been considered as one of the two major generalizations of the celebrated Great Picard theorem claiming that an analytic function in a punctured neighbourhood of an essential singularity takes every complex value, with at most one exception, infinitely many times. This deep result was found by \'Emile Picard \cite{picard} in 1879. The other extension is due to Gaston Julia \cite{julia} who achieved in 1919 a remarkable refinement of Picard's theorem by showing that one can even add a restriction on the angle at the essential singularity to lie in an arbitrarily small cone (see also \cite[\S 12.4]{burckel}). In the context of Dirichlet series appearing in number theory, however, it is more natural to consider the so-called {\it Julia lines} rather than Julia directions; for this and further details we refer to \cite{steusur}. In view of 
$$
\left|\Delta_\mathcal{L}(\sigma+it)\right|=\left(\lambda Q^2|t|^{d_\mathcal{L}}\right)^{1/2-\sigma}\left(1+O\left(\dfrac{1}{|t|}\right)\right)\qquad\mbox{for}\quad \vert t\vert\geq 1
$$
(see Lemma \ref{StirlingsDelta}) and Theorem \ref{RvMtheorem} it follows that the critical line $1/2+i\R$ is a Julia line for $\Delta_{\mathcal L}(s)$. It is not dfifficult to see that the real axis is another Julia line for $\Delta_{\mathcal L}$ and that there is no other. This follows from a straightforward application of Rouch\'e's theorem along the lines for the Riemann zeta-function (see \cite{levi}).

For our second application of the argument principle \eqref{argprinc} we consider $g(s)=x^s$ with some positive number $x\neq1$. Similar investigations arise quite often in zeta-function theory. In the case of $\zeta(s)$, Edmund Landau \cite{land} proved that the explicit formula
\begin{equation}\label{Lan}
\sum\limits_{0<\gamma\leq T}x^\rho=-\dfrac{T}{2\pi}\Lambda(x)+O_x(\log T),
\end{equation}
holds for any $T>1$ and any positive number $x>1$, where $\rho=\beta+i\gamma$ denotes the nontrivial zeros of $\zeta(s)$ and $\Lambda(x)$ is the von Mangoldt $\Lambda$-function extended to the whole real line by setting it to be zero if $x$ is not a positive integer. 
The second author \cite{steu1} proved a similar result in the case of $a$-points of $\zeta(s)$, namely, for $a\neq1$,
\begin{equation}\label{St}\sum\limits_{0<\gamma_a\leq T}x^{\rho_a}=\dfrac{T}{2\pi}\left(a(x)-x\Lambda\left(\frac{1}{x}\right)\right)+O_{x,\epsilon}\left(T^{1/2+\epsilon}\right),
\end{equation}
where $\rho_a=\beta_a+i\gamma_a$ denotes the so called {\it nontrivial} $a$-points of $\zeta(s)$ and $a(x)$ is a certain computable arithmetical function extended to the whole real line. 
The dependence of the implicit constants on $x$ in \eqref{Lan} and \eqref{St} had been made explicit by Steven M. Gonek \cite{gonek2}, Martin Rehberg \cite{reh} and Siegfried Baluyot \& Gonek \cite{balgon}.
Compared to these results, the asymptotics given in the following theorem for the case of nontrivial $a$-points of $\Delta_\mathcal{L}(s)$ are rather different. %This is probably not unexpected since $\Delta_\mathcal{L}(s)$ is not a zeta-function and its behavior is well-understood due to Stirling's formula for the Gamma function (see Lemma \ref{StirlingsDelta} in the appendix).

\begin{Theorem}\label{Landau}
Let $a\neq0$ be a complex number, $\mathcal{L}\in\mathcal{S}^\sharp$ with $d_\mathcal{L}\geq1$ and $y(t)$ be a function such that $2\leq y(t)\leq{T}/|4\Theta|$. Then there exists $T_0=T_0(a,\mathcal{L},y)>0$ such that
for any $x\neq1$ and any $T,T'$ with $T_0\leq T<T+1/\log T\leq T'\leq2T$ satisfying
\begin{equation}\label{length}
0<\max\left\{ x,\dfrac{1}{x}\right\}\leq \left(1-\dfrac{1}{y(T)}\right)\lambda Q^2T^{d_\mathcal{L}},
\end{equation} 
we have
\[
\sum_{\substack{T<\gamma_a\leq T'}} x^{\delta_a}
=\dfrac{x^{1/2}M(x,T,T')}{2\pi i|\log x|}+O_{a,\mathcal{L},y}\left(x^{1/2}\left(\psi(T)+y(T)|\log x|+\dfrac{1}{|\log x|}\right)\right),
\]
where $\delta_a$ are the nontrivial $a$-points of $\Delta_\mathcal{L}(s)$ and
\begin{align}\label{MainTerm}
M\left(x,T,T'\right):=x^{iT'}\log\left(\lambda Q^2\left(T'\right)^{d_\mathcal{L}}\right)-x^{iT}\log\left(\lambda Q^2T^{d_\mathcal{L}}\right).
\end{align}
\end{Theorem}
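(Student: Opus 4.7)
The plan is to apply the argument principle \eqref{argprinc} with $g(s) = x^s$ on a rectangle $\mathcal{R}$ whose horizontal sides lie at heights $t = T$ and $t = T'$ (perturbed within the allowed slack $1/\log T$ so as to avoid $a$-points on the boundary) and whose vertical sides lie at $\sigma = 1/2 - \alpha$ and $\sigma = 1/2 + \alpha$, with $\alpha = \alpha(T, y)$ chosen so that $|\Delta_{\mathcal L}|$ is comfortably bounded away from $|a|$ on both sides. By the Stirling-type asymptotic of Lemma \ref{StirlingsDelta}, $|\Delta_{\mathcal L}(1/2 \pm \alpha + it)| \asymp (\lambda Q^2 t^{d_{\mathcal L}})^{\mp \alpha}$, so in view of the hypothesis \eqref{length} the natural choice is $\alpha$ comparable to $y(T)/d_{\mathcal L}$, which forces $|\Delta_{\mathcal L}/a|^{\pm 1}$ on the two sides to shrink essentially like the target error.

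The central tool is the pair of identities
\[
F(s) := \frac{\Delta_{\mathcal L}'(s)}{\Delta_{\mathcal L}(s) - a} = \frac{\Delta_{\mathcal L}'(s)}{\Delta_{\mathcal L}(s)} + \frac{d}{ds}\log\!\Big(1 - \tfrac{a}{\Delta_{\mathcal L}(s)}\Big) = -\frac{d}{ds}\log\!\Big(1 - \tfrac{\Delta_{\mathcal L}(s)}{a}\Big),
\]
each valid on the vertical side where the corresponding geometric series converges. On the right side I would use the second form and integrate by parts in $t$: since $|\log(1 - \Delta_{\mathcal L}/a)| \ll |\Delta_{\mathcal L}/a|$ is small, both the boundary and the remainder contributions of the IBP are absorbed into the stated error under \eqref{length}. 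On the left side, the correction $\frac{d}{ds}\log(1-a/\Delta_{\mathcal L})$ is similarly negligible, and the main contribution comes from the $\Delta_{\mathcal L}'/\Delta_{\mathcal L}$ piece: invoking Stirling's asymptotic $\Delta_{\mathcal L}'/\Delta_{\mathcal L}(s) = -\log(\lambda Q^2 t^{d_{\mathcal L}}) + O(1/t)$ and integrating by parts in $t$ once more yields
\[
\int_T^{T'} \log\!\big(\lambda Q^2 t^{d_{\mathcal L}}\big) \, x^{it} \, dt = \frac{M(x,T,T')}{i \log x} + O\!\left(\frac{1}{|\log x|}\right),
\]
which, combined with the factor $x^{1/2 - \alpha}$ from $x^s$ on the line $\sigma = 1/2 - \alpha$, contributes a term of size $x^{1/2-\alpha} M/\log x$ to the sum.

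The horizontal sides are handled by writing $F(\sigma+iT) = \big(\Delta_{\mathcal L}'/\Delta_{\mathcal L}\big)(\sigma+iT) \cdot \Delta_{\mathcal L}(\sigma+iT)/(\Delta_{\mathcal L}(\sigma+iT) - a)$ and observing that the first factor is essentially $-\log(\lambda Q^2 T^{d_{\mathcal L}})$ (constant in $\sigma$!) by Stirling, while the second factor transitions rapidly from $\approx 1$ (where $|\Delta_{\mathcal L}| > |a|$, i.e., $\sigma < \sigma^*(T)$) to $\approx 0$ (where $|\Delta_{\mathcal L}| < |a|$, i.e., $\sigma > \sigma^*(T)$), with the crossover at $|\Delta_{\mathcal L}(\sigma^*+iT)| = |a|$ giving $\sigma^*(T) = 1/2 + O(1/\log T)$. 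This reduces each horizontal integral to $-\log(\lambda Q^2 T^{d_{\mathcal L}}) \, x^{iT} (x^{1/2} - x^{1/2-\alpha})/\log x$ up to a transition-region error; summing over the top and bottom (with orientation) produces $(x^{1/2} - x^{1/2-\alpha}) M / \log x$, which precisely cancels the unwanted $x^{1/2-\alpha}$ contribution from the left vertical and leaves the claimed main term $x^{1/2} M(x,T,T')/(2\pi i |\log x|)$. The hardest step will be the careful analysis of the transition region near $\sigma^*(T) \approx 1/2$ on the horizontal lines, where neither geometric expansion of $F$ converges rapidly and one needs the full Stirling asymptotic, including phase information, to bound $|\Delta_{\mathcal L}(\sigma+iT) - a|$ from below and obtain a clean $O(x^{1/2} \psi(T))$ error; a secondary subtlety is tuning $\alpha$ simultaneously small enough for the Stirling remainders on the vertical sides to integrate harmlessly and large enough for the right-side geometric expansion to beat the factor $x^{1/2+\alpha}$ coming from $x^s$.
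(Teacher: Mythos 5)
Your overall framework (argument principle with $g(s)=x^s$ on a rectangle straddling $\sigma=1/2$, geometric expansion of $\Delta_{\mathcal L}'/(\Delta_{\mathcal L}-a)$ on the vertical sides, integration by parts in $t$) is the same as the paper's, but your contour geometry and the mechanism by which $y(T)$ enters are both wrong, and these are not cosmetic issues.

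The paper takes the vertical sides at $\sigma = \alpha(T)$ and $\sigma = 1-\alpha(T)$ with $\alpha(T)-1/2 = c/\ell(T) \asymp 1/\log T$, i.e.\ the rectangle is very \emph{narrow}. This is essential for three reasons: (i) Lemma \ref{StirlingsDelta} only gives the Stirling asymptotics for $\Delta_{\mathcal L}$ in a vertical strip of \emph{bounded} width, so a half-width $\asymp y(T)/d_{\mathcal L}$ (which can grow like $T$) leaves the range of validity of every estimate you cite; (ii) with the narrow choice, $x^{1-\alpha} = x^{1/2}\bigl(1+O(\log x/\log T)\bigr) = x^{1/2}$ up to the claimed error, so the prefactor $x^{1/2}$ on the main term comes for free with no cancellation needed; (iii) the horizontal integrals, being over segments of length $\asymp 1/\log T$, are a pure error $O(x^{\alpha}\psi(T))$. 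Your wide-rectangle scheme instead requires the horizontal integrals to contribute a \emph{main term} of exactly the right shape to cancel the $x^{1/2-\alpha}$ mismatch from the left vertical side. This forces you into the transition-zone analysis near $\sigma^*(T)\approx 1/2$ that you yourself flag as ``the hardest step,'' and it is genuinely hard: that is precisely the region containing the $a$-points, so $\Delta_{\mathcal L}(\sigma+iT)-a$ can vanish or be arbitrarily small, and you have said nothing about how you choose $T,T'$ to avoid this (the paper perturbs to $T_1,T_2$ with $|T_k - \gamma_a|\gg \log\log T/\log T$ and invokes Lemma \ref{fractionaldecomposition}, the partial-fraction expansion, on the short horizontal segments). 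Your proposal trades the paper's elementary $O(x^{\alpha}\psi(T))$ bound on the horizontal sides for an unresolved, and almost certainly harder, problem.

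Separately, your account of how the hypothesis \eqref{length} and the function $y(T)$ enter is misattributed. In the paper the role of \eqref{length} is to guarantee a lower bound on the phase derivative $j\log(\lambda Q^2 t^{d_{\mathcal L}}/x^{1/j}) + j\Theta/t \gg j/y(T)$ in an application of the \emph{first derivative test} to the oscillatory integrals $\int \Delta_{\mathcal L}(\alpha+it)^j x^{it}\,\d t$ that arise after integrating each term of the geometric series by parts (the constraint $y(t)\le T/|4\Theta|$ is exactly what keeps the term $j\Theta/t$ from spoiling this bound). This is where the $y(T)|\log x|$ contribution to the error comes from. You never mention the first derivative test; you say only that ``the remainder contributions of the IBP are absorbed into the stated error under \eqref{length},'' which hides the entire point. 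Bounding $\int_T^{T'}\log(1-\Delta_{\mathcal L}/a)\,x^{it}\d t$ trivially by the interval length would give an error of order $(T'-T)|\log x|$, which can be as large as $T|\log x|$ --- far worse than the claimed $y(T)|\log x|$. Without an explicit oscillatory-integral estimate keyed to the phase $t\ell(t) - t\log x$, and without realizing that $\alpha$ must be kept $\asymp 1/\log T$, the proof does not close.
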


\noindent The main term in Theorem \ref{Landau} ranges between $x^{1/2}$ and $x^{1/2}\log T$. Therefore, for certain ranges of $x$ and $T$ our theorem yields an asymptotic formula.

In the succeeding sections we will show that, for fixed complex number $a\neq0$, the nontrivial $a$-points of $\Delta_\mathcal{L}(s)$ cluster in a particular manner near the vertical line $1/2+i\mathbb{R}$.
In combination with Theorem \ref{Landau} we will also be able to obtain a result regarding the vertical distribution of those points.
\begin{Cor}\label{UDmod}
Let $a$, $\mathcal{L}(s)$, $y(t)$, $T$ and $x$ be defined as in Theorem \ref{Landau}. 
If $(\gamma_a)_{\gamma_a>0}$ is the increasing sequence of imaginary parts of the nontrivial $a$-points $\delta_a=\beta_a+i\gamma_a$ of $\Delta_\mathcal{L}(s)$, then
\begin{equation}\label{imagin}
\sum_{T<\gamma_a\leq2T}x^{i\gamma_a}\ll_{a,\mathcal{L},y}(1+|\log x|)\left(\dfrac{\log T}{|\log x|}+\psi(T)+y(T)|\log x|\right).
\end{equation}
In particular, the sequence $(\alpha\gamma_a)_{\gamma_a>0}$ is uniformly distributed modulo one in $\mathbb{R}$ for every real number $\alpha\neq0$.
\end{Cor}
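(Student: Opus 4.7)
The claim has two parts: the exponential‑sum estimate \eqref{imagin}, from which the uniform distribution follows by Weyl's criterion. For \eqref{imagin} I would start by applying Theorem \ref{Landau} with $T' = 2T$. Since $|x^{iT}| = |x^{i(2T)}| = 1$ (as $x>0$), the main term satisfies $|M(x,T,2T)| \leq 2\log(\lambda Q^2(2T)^{d_\mathcal{L}}) = O_\mathcal{L}(\log T)$, and dividing the conclusion of Theorem \ref{Landau} through by $x^{1/2}$ yields
\begin{equation*}
\sum_{T<\gamma_a\leq 2T} x^{\delta_a - 1/2} \ll \frac{\log T}{|\log x|} + \psi(T) + y(T)|\log x| + \frac{1}{|\log x|}.
\end{equation*}

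The next step is to pass from $x^{\delta_a - 1/2} = x^{i\gamma_a}\cdot x^{\beta_a - 1/2}$ to $x^{i\gamma_a}$ using the clustering of nontrivial $a$-points of $\Delta_\mathcal{L}$ near $\sigma = 1/2$ developed in the succeeding sections of the paper. Given the identity
\begin{equation*}
\sum_{T<\gamma_a\leq 2T} x^{i\gamma_a} = \sum_{T<\gamma_a\leq 2T} x^{\delta_a - 1/2} - \sum_{T<\gamma_a\leq 2T} x^{i\gamma_a}\bigl(x^{\beta_a - 1/2} - 1\bigr),
\end{equation*}
I would Taylor-expand $x^{\beta_a - 1/2} - 1 = (\beta_a - 1/2)\log x + O\bigl(((\beta_a - 1/2)\log x)^2\bigr)$ and exploit the slow variation of $\beta_a - 1/2$ across the dyadic block $(T,2T]$ that the clustering guarantees to rearrange the right-hand side as a self-consistency equation of the form $(1 + O(|\log x|/\log T))\sum x^{i\gamma_a} = \sum x^{\delta_a - 1/2} + \text{error}$. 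Resolving this for $\sum x^{i\gamma_a}$ gives \eqref{imagin}; the factor $1 + |\log x|$ absorbs the cost of inverting the self-consistency factor.

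With \eqref{imagin} in hand, the equidistribution claim follows via Weyl's criterion. Fix a nonzero integer $h$ and a nonzero real $\alpha$, and set $x := \exp(2\pi h\alpha)$, so $x > 0$, $x \neq 1$, and $x^{i\gamma_a} = e^{2\pi i h\alpha\gamma_a}$, with $|\log x|$ a fixed positive constant. Choose $y(T) := \log T$, which is admissible for all $T$ sufficiently large, and for which the length condition \eqref{length} holds once $T$ is large (since $\max\{x,1/x\}$ is fixed while $\lambda Q^2 T^{d_\mathcal{L}}\to\infty$). Then \eqref{imagin} gives $\sum_{T<\gamma_a\leq 2T} x^{i\gamma_a} \ll_{a,\mathcal{L},h,\alpha} \log T$, and a dyadic decomposition of $(0,T]$ produces $\sum_{0<\gamma_a\leq T} x^{i\gamma_a} \ll (\log T)^2$. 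Combined with $N_+(T;a,\Delta_\mathcal{L}) \sim (d_\mathcal{L}/(2\pi))T\log T$ from Theorem \ref{RvMtheorem}, the Weyl ratio is $O((\log T)/T) \to 0$, completing the proof.

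\textbf{Main obstacle.} The delicate step is the passage from $\sum x^{\delta_a - 1/2}$ to $\sum x^{i\gamma_a}$. A naive term-by-term bound $|x^{\beta_a - 1/2} - 1| \ll |\log x|/\log T$ combined with the $\asymp T\log T$ count of $a$-points in the dyadic block yields a total error of order $|\log x|T$, which exceeds the target bound by a factor of $T/\log T$. Genuine cancellation among the terms $x^{i\gamma_a}(x^{\beta_a - 1/2} - 1)$ must therefore be exploited, which in turn hinges on the explicit leading-order form of $\beta_a - 1/2$ furnished by the clustering result. By contrast, the Weyl-criterion deduction is entirely routine given \eqref{imagin}.
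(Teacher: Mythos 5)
The overall architecture---apply Theorem \ref{Landau} with $T'=2T$, strip off the factor $x^{\beta_a-1/2}$ by exploiting the clustering of $\beta_a$ near $1/2$, then pass to Weyl's criterion after a dyadic decomposition---is the right one, and your Weyl-criterion step essentially matches the paper's (they take $y(t)=\psi(t)$ where you take $y(T)=\log T$, but both are admissible). However, the passage from $\sum x^{\delta_a-1/2}$ to $\sum x^{i\gamma_a}$, which you correctly single out as the main obstacle, is not actually carried out, and the ``self-consistency'' you sketch would not close up. Applying partial summation to $\sum_{T<\gamma_a\leq 2T} x^{i\gamma_a}\bigl(x^{\beta_a-1/2}-1\bigr)$ with $x^{\beta_a-1/2}-1$ as the slowly varying factor produces on the right-hand side the \emph{unknown} partial sums $\max_{T'}\bigl|\sum_{T<\gamma_a\leq T'}x^{i\gamma_a}\bigr|$, multiplied by the total variation of $x^{\beta_a-1/2}$ over the block, which is $\asymp|\log x|$ rather than $O(|\log x|/\log T)$: from $\beta_a=\tfrac12-\log|a|/\ell(\gamma_a)+O(1/(\gamma_a\ell(\gamma_a)))$ one gets $\sum|\beta_a'-\beta_a|=O(1)$ (the main part telescopes, the error part has $\asymp T\log T$ terms each $O(1/(T\log T))$), whence $\sum\bigl|x^{\beta_a'-1/2}-x^{\beta_a-1/2}\bigr|\ll|\log x|$. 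Since $|\log x|$ may be as large as $\asymp\log T$ under \eqref{length}, that self-consistency coefficient is not contractive, and your account of the $(1+|\log x|)$ prefactor as ``the cost of inverting'' it is not correct either.

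The paper sidesteps the circularity by summing by parts in the other direction: write $x^{i\gamma_a}=x^{-\beta_a}\cdot x^{\delta_a}$ and treat $x^{-\beta_a}$ as the slowly varying coefficient and $x^{\delta_a}$ as the summand, whose intermediate partial sums $\sum_{T<\tilde{\gamma}_a\leq\gamma_a}x^{\tilde{\delta}_a}$ are \emph{known} from Theorem \ref{Landau} (plus the density estimate near the lower endpoint). The same telescoping computation gives $\sum\bigl|x^{-\beta_a'}-x^{-\beta_a}\bigr|\ll|\log x|/x^{1/2}$, and the $(1+|\log x|)$ factor in \eqref{imagin} then arises simply as the sum of the boundary term $x^{-\beta_a^T}\bigl|\sum x^{\delta_a}\bigr|$ (contributing order $1$) and the total-variation term (contributing order $|\log x|$)---no inversion is involved.
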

Recall here that a sequence of real numbers $(x_n)_{n\in\mathbb{N}}$ is called {\it uniformly distributed modulo one} if
\[
\lim\limits_{N\to\infty}\dfrac{1}{N}\sharp\left\{1\leq n\leq N:\lbrace x_n\rbrace\in[a,b]\right\}=b-a
\]
for any real numbers $0\leq a\leq b\leq1$, where $\lbrace x\rbrace:=x-\lfloor x\rfloor$ for $x\geq0$. 

For the final application of the argument principle \eqref{argprinc} we consider $g=\mathcal{L}$. This problem is motivated by an earlier work of the second author in collaboration with Justas Kalpokas \cite{kapsteu} and the third author \cite{steusur}.

\begin{Theorem}\label{moment}
Let $a\neq0$ be a complex number, $\mathcal{L}\in\mathcal{S}^\sharp$ with $d_\mathcal{L}\geq1$ and $A\ge0$ be such that $\mathcal{L}(1/2+it)\ll_\epsilon|t|^{A+\epsilon}$ as $t\to\infty$.
Then, for every $T>0$,
\begin{align}\label{firstmoments}
\sum_{0<\gamma_a\leq T}{\mathcal L}(\delta_a)
=(f(1)+a\overline{f(1)})N_+(T;a,\Delta_\mathcal{L})+ O_{a,\mathcal{L},\epsilon}\left(T^{A+\epsilon}\right).
\end{align}
\end{Theorem}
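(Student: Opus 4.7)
The plan is to apply the argument principle~\eqref{argprinc} with $g(s) = \mathcal{L}(s)$ to a rectangle $\mathcal{R}$ enclosing precisely the nontrivial $a$-points with $0 < \gamma_a \leq T$; a convenient choice has vertical sides at $\sigma = c$ and $\sigma = 1-c$ for a suitable fixed $c > 1$ and horizontal sides at a small height $t_0 > 0$ and at $t = T$ (perturbed if necessary to avoid $a$-points), yielding
\[
\sum_{0<\gamma_a\leq T}\mathcal{L}(\delta_a)=\frac{1}{2\pi i}\oint_{\partial\mathcal{R}}\frac{\Delta_\mathcal{L}'(s)\mathcal{L}(s)}{\Delta_\mathcal{L}(s)-a}\,ds+O(1),
\]
where the $O(1)$ absorbs a possible residue at $s=1$.

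To exhibit the main term, I would decompose $\mathcal{L}(s)$ via an approximate functional equation
\[
\mathcal{L}(s)=\sum_{n\leq X}\frac{f(n)}{n^s}+\Delta_\mathcal{L}(s)\sum_{n\leq Y}\frac{\overline{f(n)}}{n^{1-s}}+R(s),
\]
with $X,Y\sim T^{d_\mathcal{L}/2}$. Substituting into the contour integral (equivalently, evaluating at $s=\delta_a$ using $\Delta_\mathcal{L}(\delta_a)=a$ and summing), the $n=1$ contributions from the two series are
\[
\frac{f(1)}{2\pi i}\oint\frac{\Delta_\mathcal{L}'}{\Delta_\mathcal{L}-a}\,ds+\frac{\overline{f(1)}}{2\pi i}\oint\frac{\Delta_\mathcal{L}'\Delta_\mathcal{L}}{\Delta_\mathcal{L}-a}\,ds=\bigl(f(1)+a\overline{f(1)}\bigr)N_+(T;a,\Delta_\mathcal{L}),
\]
where I combine the algebraic identity $\frac{\Delta_\mathcal{L}'\Delta_\mathcal{L}}{\Delta_\mathcal{L}-a}=\Delta_\mathcal{L}'+a\frac{\Delta_\mathcal{L}'}{\Delta_\mathcal{L}-a}$ with the single-valuedness identity $\oint\Delta_\mathcal{L}'\,ds=0$ and the argument principle $\oint\frac{\Delta_\mathcal{L}'}{\Delta_\mathcal{L}-a}\,ds=2\pi iN_+$. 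The residual $n\geq 2$ tail sums $\sum_n f(n)\sum_{\delta_a}n^{-\delta_a}$ and $a\sum_n\overline{f(n)}\sum_{\delta_a}n^{\delta_a-1}$ are controlled termwise by Theorem~\ref{Landau} (after a dyadic decomposition of $(0,T]$ to match its $T<\gamma_a\leq T'$ hypothesis); using $|f(n)|\ll n^\epsilon$ and the main term of Theorem~\ref{Landau} of order $x^{1/2}\log T$, summation over $n$ gives a combined contribution $\ll X^{1/2+\epsilon}\log T+Y^{1/2+\epsilon}\log T\ll T^{d_\mathcal{L}/4+\epsilon}$.

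The principal obstacle is controlling the AFE remainder $\sum_{\delta_a}R(\delta_a)$ and sharpening the combined error to $O(T^{A+\epsilon})$. Because the $a$-points cluster near the critical line with $\beta_a=1/2+O(1/\log\gamma_a)$, the hypothesis $\mathcal{L}(1/2+it)\ll t^{A+\epsilon}$ controls $R(\delta_a)$ provided one can pass from this pointwise bound on $\sigma=1/2$ to the $\delta_a$'s via Phragm\'en--Lindel\"of interpolation. Executing this bound cleanly---balanced against the optimal choice of $X,Y$, and likely requiring a direct estimate of $\frac{1}{2\pi i}\oint\frac{\Delta_\mathcal{L}'R}{\Delta_\mathcal{L}-a}\,ds$ on a contour close to the critical line so that the $A$-bound enters directly, together with careful tracking of the cancellation between the ``endpoint'' contributions of $\int\Delta_\mathcal{L}'\,ds$ on the vertical sides and the corresponding horizontal-segment contributions (via $\oint\Delta_\mathcal{L}'\,ds=0$)---is the technical crux.
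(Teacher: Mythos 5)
Your proposal diverges from the paper's method and has two genuine gaps. The paper does \emph{not} use an approximate functional equation. Instead, it takes the rectangle with vertical sides $\sigma=2$ and $\sigma=1/2-\varepsilon$ (for small fixed $\varepsilon>0$), observes that on $\sigma=1/2-\varepsilon$ one has $|\Delta_\mathcal{L}(s)|\gg (\lambda Q^2|t|^{d_\mathcal{L}})^\varepsilon$ so that $\frac{\Delta_\mathcal{L}'(s)}{\Delta_\mathcal{L}(s)-a}$ can be expanded in the geometric series $\frac{\Delta_\mathcal{L}'}{\Delta_\mathcal{L}}(s)\sum_{k\ge0}(a/\Delta_\mathcal{L}(s))^k$, and then shifts \emph{each} term of that series back to $\sigma=2$ where the Dirichlet series for $\mathcal{L}$ converges absolutely. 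On $\sigma=2$ the first derivative test kills all $n\ge2$ terms, leaving the $f(1)$ (for $k=0$) and $a\overline{f(1)}$ (for $k=1$, after applying \eqref{SymmetryIdentity1} and the functional equation of $\mathcal{L}$) main terms; the entire $T^{A+\epsilon}$ error is produced by the horizontal segments, where the hypothesized bound $\mathcal{L}(1/2+it)\ll_\epsilon|t|^{A+\epsilon}$ enters via Phragm\'en--Lindel\"of \eqref{esti}. Your algebraic identity $\frac{\Delta'\Delta}{\Delta-a}=\Delta'+a\frac{\Delta'}{\Delta-a}$ combined with $\oint\Delta'\,ds=0$ to produce the main term $(f(1)+a\overline{f(1)})N_+$ is correct and arguably slicker than the paper's extraction, but it is attached to a strategy that doesn't reach the claimed error bound.

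The gaps. First, bounding the $n\ge2$ tails by Theorem~\ref{Landau} is structurally unable to yield $O(T^{A+\epsilon})$: the main and error terms of Theorem~\ref{Landau} are of order $x^{1/2}\log T/|\log x|$ and $x^{1/2}(\psi(T)+y(T)|\log x|)$, which, after inserting $x=1/n$ or $x=n$, weighting by $|f(n)|\ll n^\epsilon$, and summing over $2\le n\le X\asymp T^{d_\mathcal{L}/2}$ (and over $O(\log T)$ dyadic blocks), gives $\asymp T^{d_\mathcal{L}/4+\epsilon}$ — the convexity exponent, with no mechanism to absorb subconvexity information $A<d_\mathcal{L}/4$. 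The statement asserts $O(T^{A+\epsilon})$, which can be strictly smaller (e.g. for $\zeta$ with $A=13/84$). Second, you explicitly defer the treatment of the approximate-functional-equation remainder $\sum_{\delta_a}R(\delta_a)$, calling it ``the technical crux''; but since the $a$-points lie at $\beta_a=1/2+O(1/\log\gamma_a)$ and $R$ is not small pointwise off the critical line for a fixed truncation $X,Y$, there is no straightforward way to control it from the hypothesis on $\mathcal{L}(1/2+it)$ alone, and the whole advantage of the $A$-bound is lost. The paper sidesteps both issues by never introducing an AFE; you would need to replace your tail and remainder estimates by a contour argument of the paper's type to close the proof.
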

 
\noindent Observe that, in general, a Riemann-type functional equation has several solutions in the (extended) Selberg class; for example, all Dirichlet $L$-functions $L(s;\chi)$ associated with odd characters $\chi\bmod\,5$ satisfy the same functional equation.

The error term in \eqref{firstmoments} results from the estimate of the integral \eqref{argprinc} on its horizontal sides.
We describe this in a little more detail. Using a partial fractional decomposition of $\Delta'_\mathcal{L}(s)/(\Delta_\mathcal{L}(s)-a)$ on these horizontal paths allows us to bound it by some power of $\log{T}$. Meanwhile it suffices to bound $\mathcal{L}(1/2+iT)$ trivially by $T^{A+\epsilon}$.
In general, one has $\mathcal{L}(1/2+it)\ll_\epsilon t^{d_\mathcal{L}/4+\epsilon}$ as $t\to\infty$, which is known as the \emph{convexity} bound (see \cite[Theorem 6.8]{steudi} or \eqref{esti}).
Hence, for elements $\mathcal{L}\in\mathcal{S}^\sharp$ with $d_\mathcal{L}<4$, Theorem \ref{moment} yields an asymptotic formula, while for elements of $\mathcal{S}^\sharp$ with degree $d_\mathcal{L}\geq 4$ ``sharper" estimates of $\mathcal{L}(1/2+it)$ are needed, namely $\mathcal{L}(1/2+it)\ll_\epsilon t^{A+\epsilon}$ for some positive number $A<1$.
However, such \emph{subconvexity} estimates usually amount to understanding the arithmetical behavior of the coefficients $f(n)$ of the Dirichlet series expansion of $\mathcal{L}(s)$. The most simple example is the Riemann zeta-function for which $\zeta(1/2+it)\ll_\epsilon t^{13/84+\epsilon}$ holds, recently proven by Jean Bourgain \cite{bourg}. Applying this bound to \eqref{firstmoments}, it is obvious that in the case of $\zeta^{k}(s)$, $k\leq6$, the main term in the estimate \eqref{firstmoments} is dominant. On the other hand, we have not assumed anything on the arithmetical nature of the coefficients $f(n)$ and thus $\mathcal{L}(1/2+it)\ll_\epsilon t^{d_\mathcal{L}/4+\epsilon}$ is currently the best known bound for elements of $\mathcal{S}^\sharp$ of large degree. We remark that if the \emph{Lindel\"of Hypothesis} for $\mathcal{L}$ is true, namely
\begin{equation}\label{GLH}
\mathcal{L}(1/2+it)\ll_\epsilon t^\epsilon\qquad\mbox{as}\quad t\to\infty,
\end{equation}
then Theorem \ref{moment} yields an asymptotic formula for arbitrary $\mathcal{L}\in\mathcal{S}^\sharp$.

\subsection*{Structure of the paper and notations} 

All constants appearing in the sequel depend on the complex number $a$ and the function $\mathcal{L}$. In particular, the constants depend also on $\lambda_j,\Im\mu_j$ and $Q$. Since our results are with respect to fixed $a$ and $\mathcal{L}$, we will drop any such subscripts in the Landau symbols $O(\cdot)$, $o(\cdot)$ or the Vinogradov symbols $\ll$, $\gg$. Moreover, $f\asymp g$ simply means that $|f|\ll |g|\ll |f|$.
At the end of this paper, we also provide an appendix regarding properties of $\Delta_\mathcal{L}(s)$, which mainly follow from an application of Stirling's formula, and which will be utilized frequently in the sequel.

%%%%%%%%%%%%%%%%%%%%%%%

%%%%%%%%%%%%%%%%%%%%

\section{A Riemann-von Mangoldt Formula --- Proof of Theorem \ref{RvMtheorem}}\label{SectRvMtheorem}

Let $a\neq0$ be a complex number with $|a|\leq1$. Assume also that $\pm T$ are not ordinates of some $a$-point.
For simplicity we define here
\begin{equation}\label{abbreviation}
\ell(t)=\log\left({\lambda Q^2|t|^{d_\mathcal{L}}}\right).
\end{equation}
for every $|t|>0$.
Then Stirling's formula for $\Delta_\mathcal{L}$, (\ref{Deltasymp}), yields
\begin{equation} \label{delta_asymp}
\Delta_\mathcal{L}(\sigma+it) \asymp e^{(1/2-\sigma)\ell(t)},\quad|t|\geq t_0>0.
\end{equation}
Moreover, since $|\Delta_\mathcal{L}(\beta_a+i\gamma_a)|=|a|$, \eqref{delta_asymp} implies that
\begin{equation}\label{rp}
\beta_a=\dfrac{1}{2}-\dfrac{\log|a|}{\ell({\gamma}_a)}+O\left(\dfrac{1}{\gamma_a\ell({\gamma}_a)}\right)\in(0,1).
\end{equation}
for any $|\gamma_a|\geq T_0$, where $T_0>0$ is assumed to be sufficiently large.
Let $|t|\geq T_0$ and
\begin{equation}\label{alpha}
\alpha(t) := \dfrac12+\dfrac{c}{\ell({t})}
\end{equation}
where $c=c(a,\mathcal{L})>0$ is a sufficiently large constant satisfying
\begin{equation}\label{dist}
\left|\beta_a-\dfrac{1}{2}\right|\leq\dfrac{c}{2\ell({t})}
\end{equation}
for any $a$-point with $|\gamma_a|\geq |t|\geq T_0$ as follows from \eqref{rp}. Moreover,
\begin{equation}\label{inequality1}
\begin{array}{cccccccccc}
|\Delta_\mathcal{L}\left(\alpha(u)+it\right)|&\leq& Ke^{-c\ell\left({t}\right)/\ell\left({u}\right)}&=&Ke^{-c}&<&\dfrac{|a|}{2},
\end{array}
\end{equation}
\begin{equation}\label{inequality2}
\begin{array}{cccccccccc}
|\Delta_\mathcal{L}\left(1-\alpha(u)+it\right)|&=&|\Delta_\mathcal{L}\left(\alpha(u)+it\right)|^{-1}&\geq& \dfrac{e^{c}}{K}&\geq&2|a|,
\end{array}
\end{equation}
for any $|t|\geq|u|\geq T_0$, where $K$ is a constant occuring from \eqref{delta_asymp} and we can assume without loss of generality that $K>1$. After choosing a suitable $K$, we then take $c$ sufficiently large. Lastly, Lemma \ref{Value-DistributionDelta} implies that there exists $T_1\geq T_0$, which we can assume without loss of generality not to be an ordinate of some $a$-point, such that if $\Delta_\mathcal{L}(s)=a$ for some $|t|\geq T_1$, then $\sigma\geq1/2$.

With the above settings, we find that \eqref{alpha}, \eqref{dist} and the calculus of the residues imply
\begin{eqnarray*}
2\pi N_\pm(T;a,\Delta_\mathcal{L})
&=&2\pi\left(N_\pm(T;a,\Delta_\mathcal{L})-N_\pm(T_1;a,\Delta_\mathcal{L})\right)+O(1)\\
&=&V_{\mathcal{C}_\pm}\arg\left(\Delta_{\mathcal{L}}(s)-a\right)+O(1),
\end{eqnarray*}
where $V_{\mathcal{C_\pm}}$ denotes the variation around the counterclockwise oriented contour 
$$\mathcal{C}_\pm:=\bigcup_{i\leq4} L_i^\pm$$
 and the paths $L_i^\pm$, $ 1\leq i\leq 4$, are defined as follows:
\begin{equation*}
\begin{array}{ll}
L_1^+=\left[\alpha(T_1)+iT_1,\alpha(T)+iT\right],&L_1^-=\left[\alpha(T)-iT,\alpha(T_1)-iT_1\right],\\
L_2^+=\left[\alpha(T)+iT,1-\alpha(T)+iT\right],&L_2^-=\left[\alpha(T_1)-iT_1,1-\alpha(T)-iT_1\right],\\
L_3^+=\left[1-\alpha(T)+iT,1-\alpha(T)+iT_1\right],&L_3^-=[1-\alpha(T)-iT_1,1-\alpha(T)-iT],
\\
L_4^+=\left[1-\alpha(T)+iT_1,\alpha(T_1)+iT_1\right],&L_4^-=\left[1-\alpha(T)-iT,\alpha(T)-iT\right].
\end{array}
\end{equation*}

Since $L_4^+$ and $L_2^-$ are subsets of the line segments $[1-\alpha(T_1)\pm iT_1,\alpha(T_1)\pm iT_1]$, respectively, and therefore independent of $T\geq T_1$, we obtain
\begin{equation}\label{finite}
V_{L_4^+}\arg\left(\Delta_\mathcal{L}(s)-a\right)=O(1)=V_{L_2^-}\arg\left(\Delta_\mathcal{L}(s)-a\right).
\end{equation}
In addition, inequalities \eqref{inequality1} imply
\begin{equation}
V_{L_1^\pm}\arg\left(\Delta_\mathcal{L}(s)-a\right)=V_{L_1^\pm}\arg(-a)+V_{L_1^\pm}\arg\left(1-\dfrac{\Delta_\mathcal{L}(s)}{a}\right)=0+O(1).
\end{equation}
In view of \eqref{inequality2} and Lemma \ref{StirlingsDelta}, it follows that
\begin{eqnarray}
V_{L_3^\pm}\arg\left(\Delta_{\mathcal{L}}(s)-a\right)\nonumber
&=&V_{L_3^\pm}\arg\left(\Delta_{\mathcal{L}}(s)\right)+V_{{L_3^\pm}}\arg\left(1-\dfrac{a}{\Delta_{\mathcal{L}}(s)}\right)\\
&=&\Im\int_{L_3^\pm}\dfrac{\Delta'_\mathcal{L}}{\Delta_\mathcal{L}}(s)\mathrm{d}s+O(1)\nonumber\\
&=&\int_{T_1}^{T}\left(\log\left(\lambda Q^2t^{d_\mathcal{L}}\right)\pm\dfrac{\Theta}{t}+O\left(\dfrac{1}{t^2}\right)\right)\mathrm{d}t+O(1)\nonumber\\
&=&Td_\mathcal{L}\log{T}+T\log\left(\dfrac{\lambda Q^2}{e^{d_\mathcal{L}}}\right)\pm\Theta\log T+O(1).
\end{eqnarray}

Next we prove
$$V_{L_2^+}\arg\left(\Delta_{\mathcal{L}}(s)-a\right)\ll\dfrac{\log T}{\log\log T}$$
in a similar manner to how $\arg\zeta(1/2+iT)$ is estimated (see for example \cite[\S 9.4]{titchm}).
Suppose that $\Re\left(\Delta_\mathcal{L}(\sigma+iT)-a\right)$ has $N$ zeros for $1-\alpha(T)\leq\sigma\leq\alpha(T)$.
We divide the interval $[1-\alpha(T),\alpha(T)]$ into at most $N+1$ subintervals in each of which $\Re\left(\Delta_\mathcal{L}(\sigma+iT)-a\right)$ is of constant sign.
Thus,
\begin{equation}\label{VL4}
\left|V_{L_2^+}\arg\left(\Delta_{\mathcal{L}}(s)-a\right)\right|\leq(N+1)\pi.
\end{equation}
To estimate $N$ let 
$$g(z):=\dfrac{1}{2}\left(\Delta_\mathcal{L}(z+iT)-a+\overline{\Delta_\mathcal{L}(\overline{z}+iT)}-\overline{a}\right).$$
Then $g(\sigma)=\Re\left(\Delta_\mathcal{L}(\sigma+iT)-a\right)$, and thus $N$ is equal to the number of zeros of $g(z)$ for $\Im z=0$ and $1-\alpha(T)\leq\Re z\leq\alpha(T)$.
Hence, if $n(r)$ denotes the number of zeros of $g(z)$ inside the disk $|z-\alpha(T)|\leq r$, then 
\begin{equation}
N\leq n(2\alpha(T)-1).
\end{equation}
On the other hand, we have
\begin{equation}
\int_{0}^{\alpha(T)}\dfrac{n(r)}{r}\mathrm{d}r\geq\int_{2\alpha(T)-1}^{\alpha(T)}\dfrac{n(r)}{r}\mathrm{d}r\geq n(2\alpha(T)-1)\log\dfrac{\alpha(T)}{2\alpha(T)-1},
\end{equation}
where the inequality $\alpha(T)\geq2\alpha(T)-1$ holds for any $T\geq T_0$ and sufficiently large $T_0>0$.
Since $g(z)$ is analytic in the disk $|z-\alpha(T)|\leq \alpha(T)+1$ and $g(\alpha(T))\neq0$ from \eqref{inequality1}, Jensen's theorem \cite[\S 3.61]{titch} yields
\begin{equation}\label{Jensen's}
\int_{0}^{\alpha(T)}\dfrac{n(r)}{r}\mathrm{d}r=\dfrac{1}{2\pi}\int_0^{2\pi}\log\left|g\left(\alpha(T)+\alpha(T) e^{i\theta}\right)\right|\mathrm{d}\theta-\log\left|g(\alpha(T))\right|.
\end{equation}
Observe that from \eqref{delta_asymp} we know that the right integral above is less than $A\log T$ for some constant $A>0$.
Moreover, \eqref{inequality1} implies $|g(\alpha(T))|\asymp1$.
Therefore the integral on the left-hand side of \eqref{Jensen's} is less than $A\log T$ for some constant $A>0$.
It follows now from \eqref{VL4}--\eqref{Jensen's} and the definitions \eqref{alpha} and \eqref{abbreviation} of $\alpha(T)$ and $\ell(T)$, respectively, that
$$V_{L_2^+}\arg\left(\Delta_{\mathcal{L}}(s)-a\right)
\ll\log T\left(\log\dfrac{\frac{1}{2}+\frac{c}{\ell(T)}}{\frac{2c}{\ell(T)}}\right)^{-1}
\ll\dfrac{\log T}{\log\log T}.$$
Similarly, we can show that
$$V_{L_4^-}\arg\left(\Delta_{\mathcal{L}}(s)-a\right)\ll\dfrac{\log T}{\log\log T}.$$
This concludes the proof of the theorem when $|a|\leq1$ and $\pm T$ are not ordinates of some $a$-point.

If $T$ or $-T$ is an ordinate of some $a$-point, then we apply the residue theorem in contours $\mathcal{C}_\pm$, described as before, but we integrate over horizontal segments of height $T+\varepsilon$ or $-T-\varepsilon$, respectively, for any sufficiently small $\varepsilon>0$, such that the intervals $(T,T+\varepsilon]$, $[-T-\varepsilon,-T)$ do not contain any ordinates of $a$-points. One can see that the proof is independent of our choice of $\varepsilon$. Therefore, taking $\varepsilon$ to $0$ yields \eqref{RvM}. The case of complex numbers $a\neq0$ with $|a|>1$ will follow from the identity
\begin{equation}\label{SymmetryIdentity1}
\Delta_\mathcal{L}(s)\overline{\Delta_\mathcal{L}(1-\overline{s})}=1,
\end{equation}
which implies that a complex number $z$ is an $a$-point of $\Delta_\mathcal{L}(s)$ if, and only if, $1-\overline{z}$ is an $1/\overline{a}$-point of $\Delta_\mathcal{L}(s)$.

%%%%%%%%%%%%%%%%%%%%%%%%

\section{A Landau Formula --- Proof of Theorem \ref{Landau}}

Let $x>1$. 
We keep the notations \eqref{abbreviation}--\eqref{inequality2} in the proof of Theorem \ref{RvMtheorem}, which implies that if $T_0\gg1$ is a sufficiently large number, then for any real numbers $T_0\leq T<T+1/\log T\leq T'\leq2T$, we can find 
\begin{equation}
T_1\in\left[T,T+1/(2\log T)\right)\quad\text{ and }\quad T_2\in\left(T'-1/(2\log T),T'\right]
\end{equation}
 such that
\begin{align}\label{T_i}
\min\limits_{k=1,2}\min\limits_{\delta_a}\left|T_k-\gamma_a\right|\gg\dfrac{\log\log T}{\log T}.
\end{align}

If $\mathcal{C}$ is the positively oriented rectangular contour with vertices $\alpha(T)+iT_1$, $\alpha(T)+iT_2$, $1-\alpha(T)+iT_2$, $1-\alpha(T)+iT_1$, then Theorem \ref{RvMtheorem}, inequality \eqref{dist} and the calculus of residues yield
\begin{eqnarray}\label{anf}
 \sum_{\substack{T<\gamma_a\leq T'}} x^{\delta_a}&=& \sum_{\substack{T_1<\gamma_a<T_2}} x^{\delta_a}+\sum_{\substack{T<\gamma_a<T_1}}x^{\delta_a}+\sum_{\substack{T_2<\gamma_a\leq T'}} x^{\delta_a}\nonumber\\
 &=&\frac{1}{ 2\pi i} \int_{\mathcal{C}} \frac{\Delta'(s)}{\Delta(s)-a} x^s\, \d s+O\left(x^\alpha\psi(T)\right).
\end{eqnarray}

Let $\alpha:=\alpha(T)$. 
We break the integral
$$ \frac{1}{ 2\pi i} \int_{\mathcal{C}} \frac{\Delta'(s)}{\Delta(s)-a} x^s\, \d s $$
down into
$$
\frac{1}{2\pi i} \left\{\int_{\alpha+iT_1}^{\alpha+iT_2} + \int_{\alpha+iT_2}^{1-\alpha+iT_2} + \int_{1-\alpha+iT_2}^{1-\alpha+iT_1} + \int_{1-\alpha+iT_1}^{\alpha+iT_1} \right\} \frac{\Delta'(s)}{\Delta(s)-a} x^s \,\d{s} =: \sum_{j\leq 4}I_j.
$$

In view of Lemma \ref{fractionaldecomposition}, for $k= 1, 2$ we have
\begin{eqnarray*}
\int_{1-\alpha+iT_k}^{\alpha+iT_k} \frac{\Delta'(s)}{\Delta(s)-a} x^s \,\d{s}
&=& \int_{1-\alpha+iT_k}^{\alpha+iT_k} \left(\sum_{\vert t-\gamma_a\vert\leq 1/\log\log t}\frac1{s-\delta_a} + O(\log{t})\right) x^s \,\d{s} \\
&=& \int_{1-\alpha+iT_k}^{\alpha+iT_k} \sum_{\vert t-\gamma_a\vert\leq 1/\log\log t} \frac1{s-\delta_a} x^s \,\d{s}
+ O\left(x^\alpha\right),
\end{eqnarray*}
since
$$
\int_{1-\alpha+iT_k}^{\alpha+iT_k} O(\log{t}) x^s \,\d{s}
\ll \log T_k \int_{1-\alpha}^\alpha x^\sigma\, \d\sigma
\ll x^\alpha (2\alpha-1) \log T \ll x^\alpha.
$$
Meanwhile, inequality \eqref{T_i} gives
\begin{eqnarray*}
\lefteqn{\int_{1-\alpha+iT_k}^{\alpha+iT_k} \sum_{\vert t-\gamma_a\vert\leq 1/\log\log t} \frac1{s-\delta_a} x^s \,\d{s}}\\
&\ll& \sum_{\vert T_k-\gamma_a\vert\leq 1/\log\log T_k} \int_{1-\alpha}^{\alpha} \frac{x^\sigma}{\left|\alpha-\beta_a+i(T_k-\gamma_a)\right|} \,\d{\sigma} \\
&\ll& x^\alpha (2\alpha-1) \dfrac{\log T}{\log\log T} \sum_{\vert T_k-\gamma_a\vert\leq 1/\log\log T_k} 1 \\
&\ll& x^\alpha\dfrac{\log T}{\left(\log\log T\right)^2}.
\end{eqnarray*}
Therefore,
\begin{equation} \label{hor}
I_2,I_4 \ll x^\alpha\psi(T).
\end{equation}

We now estimate the vertical integrals $I_1$ and $I_3$. 
For $\sigma=\alpha$, we can write
\begin{equation} \label{delta_i1_rewrite}
\frac{\Delta_\mathcal{L}'(s)}{\Delta_\mathcal{L}(s)-a} = \frac{\Delta_\mathcal{L}'(s)}{ -a} \cdot \frac{1}{1 - \Delta_\mathcal{L}(s)/a}
= \frac{\Delta_\mathcal{L}'(s)}{ -a} \left( 1 + \sum_{j\geq1} \left(\frac{\Delta_\mathcal{L}(s)}{a}\right)^j \right);
\end{equation}
here inequality \eqref{inequality1} allows us to expand the second factor into a geometric series. Applying this, we find that
\begin{equation}\label{I_1ref}
I_1 = -\frac{x^{\alpha}}{2\pi a} \int_{T_1}^{T_2} \Delta_\mathcal{L}'(\alpha+it) \left\{ \sum_{0\leq j< m} + \sum_{j\geq m}\right\} \left(\frac{\Delta_\mathcal{L}(\alpha+it)}{a}\right)^j x^{it} \,\d{t}.
\end{equation}
We choose now $m$, depending on $T$, large enough to bound the last term in the integral above trivially. 
Indeed, \eqref{inequality1} implies
\begin{equation} \label{delta_i1_asymp}
\sum_{j\ge m} \left(\frac{\Delta_\mathcal{L}(\alpha+it)}{a}\right)^j
= \frac{ \left(\frac{\Delta_\mathcal{L}(\alpha+it)}{a} \right)^m}{1-\frac{\Delta_\mathcal{L}(\alpha+it)}{a}}
\ll\dfrac{1}{2^m}.
\end{equation}
Hence, if we take
\begin{equation*}
m := \left\lfloor\dfrac{2\ell(T)}{\log 2}\right\rfloor,
\end{equation*}
then it follows from \eqref{delta_i1_asymp} that
\begin{eqnarray*}
I_1 &=& -\frac{x^{\alpha}}{2\pi a} \int_{T_1}^{T_2} \Delta_\mathcal{L}'(\alpha+it) \sum_{0\leq j< m} \left(\frac{\Delta_\mathcal{L}(\alpha+it)}{a}\right)^jx^{it} \,\d{t} \\
&&+~ O\left(x^\alpha\int_{T_1}^{T_2}\left|\Delta_\mathcal{L}'(\alpha+it)\right|\left(\dfrac{1}{\lambda Q^2T^{d_\mathcal{L}}}\right)^{2} \d{t}\right).
\end{eqnarray*}
The first term of the integrand in the error term can be estimated using Lemma \ref{StirlingsDelta} and \eqref{inequality1} for which we obtain
\begin{eqnarray*}
\Delta_\mathcal{L}'(\alpha+it)=\dfrac{\Delta_\mathcal{L}'(\alpha+it)}{\Delta_\mathcal{L}(\alpha+it)}\Delta_\mathcal{L}(\alpha+it)\ll\log\left(\lambda Q^2T^{d_\mathcal{L}}\right).
\end{eqnarray*}
It then follows that we can discard the last term in $I_1$ as
$$
I_1 = -\frac{x^{\alpha}}{2\pi a} \int_{T_1}^{T_2} \Delta_\mathcal{L}'(\alpha+it) \sum_{0\leq j< m} \left(\frac{\Delta_\mathcal{L}(\alpha+it)}{a}\right)^j x^{it} \,\d{t} + O\left(x^{\alpha}\right).
$$
Since
$$
\frac\d{\d{t}} \Delta_\mathcal{L}(\alpha+it)^{j+1} = i(j+1)\Delta_\mathcal{L}'(\alpha+it)\Delta_\mathcal{L}(\alpha+it)^j,
$$
we can rewrite $I_1$ as
\begin{eqnarray}\label{I_1}
I_1 &=& -\frac{x^{\alpha}}{2\pi i}\sum_{1\leq j\le m} \dfrac{1}{ja^j} \int_{T_1}^{T_2} \left(\Delta_\mathcal{L}(\alpha+it)^{j}\right)' x^{it} \,\d{t} + O\left(x^{\alpha}\right)\nonumber\\
&=:& -\frac{x^{\alpha}}{2\pi i}\sum_{j\le m} \dfrac{1}{ja^j} I_{1j}+ O\left(x^{\alpha}\right).
\end{eqnarray}
We now estimate the integrals $I_{1j}$, $j\le m$. Recall that $\alpha=\alpha(T)$.
Integrating by parts, we obtain with the aid of \eqref{inequality1} and Lemma \ref{StirlingsDelta} that
\begin{eqnarray*}
I_{1j}&=&\Delta_\mathcal{L}(\alpha+it)^{j} x^{it}\Big|_{T_1}^{T_2}-i\log x\int_{T_1}^{T_2}\Delta_\mathcal{L}(\alpha+it)^{j}x^{it}\d{t}\nonumber\\
&\ll&\left(\dfrac{|a|}{2}\right)^j+\log x\left|\int_{T_1}^{T_2}(\omega^*)^j\left(\lambda Q^2t^{d_\mathcal{L}}\right)^{(1/2-\alpha-it)j}\times\right.\nonumber\\
&&\hspace*{3.8cm}\left.\times\exp\left(ij\left(d_\mathcal{L}t-\Theta\log t\right)\right)\left(1+O\left(t^{-1}\right)\right)^jx^{it}\d{t}\right|
\end{eqnarray*}
The binomial identity implies that if $|O\left(t^{-1}\right)|<D/t$ for some $D>1$, then
\[ 
\left(1+O\left(t^{-1}\right)\right)^j=1+O\left((2D)^jt^{-1}\right). 
\]
In view of \eqref{inequality1}, we then have for a sufficiently large constant $K$ that
\begin{eqnarray}\label{I_1j}
I_{1j} &\ll&\log x\left|\int_{T_1}^{T_2}\left(\lambda Q^2t^{d_\mathcal{L}}\right)^{-jc/\ell(T)-ijt}\exp\left(ij\left(d_\mathcal{L}t-\Theta\log t\right)\right)x^{it}\mathrm{d}t\right| \nonumber\\
&&+~ \log x\int_{T_1}^{T_2}\left(2D\left(\lambda Q^2t^{d_\mathcal{L}}\right)^{-c/\ell(T)}\right)^jt^{-1}\d{t}+ \left(\dfrac{|a|}{2}\right)^j\nonumber\\
&\ll& \left|J\right|\log x+ \left(2D\dfrac{|a|}{2K}\right)^j \log x \int_{T_1}^{T_2}t^{-1}\d{t}+\left(\dfrac{|a|}{2}\right)^j\nonumber\\
&\ll& \left|J\right|\log x+ (1+\log x)\left(\dfrac{|a|}{2}\right)^j,
\end{eqnarray}
where
\begin{eqnarray*}
\mathcal{J}
&:=&\int_{T_1}^{T_2}\left(\lambda Q^2t^{d_\mathcal{L}}\right)^{-jc/\ell(T)}\exp\left[ij\left(t\ell(t)-d_\mathcal{L}t+\Theta\log t-\dfrac{t}{j}\log x \right)\right]\d{t}\\
&=&\int_{T_1}^{T_2}\left(\lambda Q^2t^{d_\mathcal{L}}\right)^{-jc/\ell(T)}\exp\left[ij\left(d_\mathcal{L}t+\Theta\right)\log\left(\left(\dfrac{\lambda Q^2}{x^{1/j}}\right)^{1/d_\mathcal{L}}\dfrac{t}{e}\right) \right]\d{t}\times\\
&&\times\exp\left[-ij\Theta\log\left(\left(\dfrac{\lambda Q^2}{x^{1/j}}\right)^{1/d_\mathcal{L}}\dfrac{1}{e}\right)\right].
\end{eqnarray*}

In order to estimate $J$ we employ the {\it first derivative test} (see for example \cite[Lemma 2.1]{ivic}):

{\it Let $F(x)$ be a real-valued differentiable fuction in an interval $[a,b]$ such that $F'(x)$ is monotonic and $|F'(x)|\geq F^{-1}>0$ throughout $[a,b]$. Let also $G(x)$ be positive and monotonic with $G(x)\ll G$ throughout $[a,b]$
	Then
	\begin{align}\label{firstderivativetest}
	\int_{a}^bG(x)\exp(iF(x))\d x\ll GF.
	\end{align}}
In our case we have that
\begin{align}\label{firstdertest}
\frac{\d}{\d t}\left[j\left(d_\mathcal{L}t+\Theta\right)\log\left(\left(\dfrac{\lambda Q^2}{x^{1/j}}\right)^{1/d_\mathcal{L}}\dfrac{t}{e}\right)\right]=j\log\left(\dfrac{\lambda Q^2t^d_\mathcal{L}}{x^{1/j}}\right)+\frac{j\Theta}{t}.
\end{align}
However, we assumed that 
$$1<x\leq\left(1-\dfrac{1}{y(T)}\right){\lambda Q^2T^{d_\mathcal{L}}},$$
and, thus,
\[
\log\left(\dfrac{\lambda Q^2t^{d_\mathcal{L}}}{x^{1/j}}\right)\geq-\log\left(1-\frac{1}{y(T)}\right)\geq\frac{1}{2y(T)}.
\]
Our assumption on $y(T)$, relation \eqref{firstdertest} and the first derivative test imply that
\begin{eqnarray}\label{Jay}
\mathcal{J}
\ll\left(\lambda Q^2T^{d_\mathcal{L}}\right)^{-jc/\ell(T)}y(T)
\ll\left(\dfrac{|a|}{2K}\right)^jy(T),
\end{eqnarray}
where the last relation follows from \eqref{inequality1}.

Now in view of \eqref{I_1}--\eqref{Jay}, we have
\begin{eqnarray}\label{sum1}
I_1&\ll& 
x^\alpha\sum\limits_{j\le m}\dfrac{1}{j|a|^j}\left(\left(\dfrac{|a|}{2K}\right)^jy(T)\log x+(1+\log x)\left(\dfrac{|a|}{2}\right)^j\right)+x^\alpha\nonumber\\
&\ll&x^\alpha(1+y(T)\log x).
\end{eqnarray}

For $\sigma=1-\alpha$, we can write
\begin{equation} \label{delta_i3_rewrite}
\frac{\Delta_\mathcal{L}'(s)}{\Delta_\mathcal{L}(s)-a} 
= \frac{\Delta_\mathcal{L}'}{\Delta_\mathcal{L}}(s) \cdot \frac{1}{1 - a/\Delta_\mathcal{L}(s)}
=\frac {\Delta_\mathcal{L}'}{\Delta_\mathcal{L}}(s) \left( 1 + \sum_{j\ge1} \left(\frac{a}{\Delta_\mathcal{L}(s)}\right)^j \right);
\end{equation}
here \eqref{inequality2} allows us to expand the second factor into a geometric series.
Thus, the left vertical integral $I_3$ can be decomposed as follows
\begin{eqnarray*}
I_3 &= &\frac{1 }{ 2\pi i} \int_{1-\alpha+iT_2}^{1-\alpha+iT_1} \frac{\Delta_\mathcal{L}'(s)}{\Delta_\mathcal{L}(s)-a} x^s \,\d{s} \\
&= &\dfrac{1}{2\pi i}\int_{1-\alpha+iT_2}^{1-\alpha+iT_1}\dfrac{\Delta_\mathcal{L}'}{\Delta_\mathcal{L}}(s)x^s \d{s}
+ \dfrac{1}{2\pi i}\int_{1-\alpha+iT_2}^{1-\alpha+iT_1}\dfrac{\Delta'_\mathcal{L}}{\Delta_\mathcal{L}}(s)x^s\sum_{j\ge1} \left(\frac{a}{\Delta_\mathcal{L}(s)}\right)^j \d{s} \\
&=:& I_{31}+I_{32}.
\end{eqnarray*}
Integrating by parts, we obtain in view of Lemma \ref{StirlingsDelta} that
\begin{eqnarray}\label{I_31f}
I_{31} 
&=& \dfrac{x^{1-\alpha}}{2\pi} \int_{T_2}^{T_1} \left(-\log\left(\lambda Q^2t^{d_\mathcal{L}}\right)+ O\left(\dfrac{1}{t}\right)\right)x^{it}\d{t} \nonumber\\
&=& \left.\dfrac{x^{1-\alpha+it}}{2\pi i\log x} \log\left(\lambda Q^2t^{d_\mathcal{L}} \right) \right|_{T_1}^{T_2}
- \dfrac{x^{1-\alpha}}{2\pi i\log x}\int_{T_1}^{T_2}O\left(\dfrac{1}{t}\right)\d{t} +O\left(x^{1-\alpha}\right)\nonumber\\
&=&\dfrac{x^{{1-}\alpha}}{2\pi i\log x}M(x,T_1,T_2)+O\left(x^{1-\alpha}\left(1+\dfrac{1}{\log x}\right)\right),
\end{eqnarray}
where $M(x,T_1,T_2)$ is as in \eqref{MainTerm}.
In the case of $I_{32}$, it follows from \eqref{SymmetryIdentity1} that
\begin{eqnarray*}
I_{32}
&=&-\dfrac{1}{2\pi i}\int_{\alpha-iT_2}^{\alpha-iT_1}\dfrac{\Delta_\mathcal{L}'}{\Delta_\mathcal{L}}(1-s)x^{1-s}\sum_{j\ge1} \left(\frac{a}{\Delta(1-s)}\right)^j \d{s}\nonumber\\
&=&\dfrac{1}{2\pi i}\int_{\alpha-iT_1}^{\alpha-iT_2}\overline{\dfrac{\Delta_\mathcal{L}'}{\Delta_\mathcal{L}}}(\overline{s})x^{1-s}\sum_{j\ge1} \left(a\overline{\Delta(\overline{s})}\right)^j \d{s}\nonumber\\
&=&-\dfrac{x^{1-\alpha}}{2\pi}\int_{T_1}^{T_2}\overline{\dfrac{\Delta_\mathcal{L}'}{\Delta_\mathcal{L}}}(\alpha+it)\sum_{j\ge1} \left(a\overline{\Delta_\mathcal{L}(\alpha+it)}\right)^j x^{it}\d{t}\nonumber
\end{eqnarray*}
or 
\begin{eqnarray}
\overline{I_{32}}&=&-\dfrac{x^{1-\alpha}}{2\pi}\int_{T_1}^{T_2}\dfrac{{\Delta_\mathcal{L}'(\alpha+it)}}{{\Delta_\mathcal{L}(\alpha+it)}}\sum_{j\ge1} \left({\overline{a}{\Delta_\mathcal{L}(\alpha+it)}}\right)^jx^{-it} \d{t}\nonumber\\
&=&-\dfrac{\overline{a}x^{1-\alpha}}{2\pi}\int_{T_1}^{T_2}\Delta_\mathcal{L}'(\alpha+it)\sum\limits_{j\geq0}\left(\overline{a}\Delta_\mathcal{L}(\alpha+it)\right)^j\left(\dfrac{1}{x}\right)^{it}\d{t}.
\end{eqnarray}
Now to estimate $\overline{I_{32}}$ we proceed exactly as \eqref{I_1ref} in the estimation of $I_1$, where we have $1/\overline{a}$ instead of $a$, $x^{1-\alpha}$ instead of $x^{\alpha}$ and $1/x$ instead of $x$ (as follows from (\ref{SymmetryIdentity1})). Then by the same reasoning as \eqref{I_1ref}--\eqref{sum1}, we finally obtain
\begin{equation}\label{I_3f}
I_{32}\ll x^{1-\alpha}(1+y(T)\log x),
\end{equation}
where we employed inequalities \eqref{inequality2} and our assumption that
$$1<\dfrac{1}{x}\leq\left(1-\dfrac{1}{y(T)}\right){\lambda Q^2T^{d_\mathcal{L}}}.$$

Collecting the estimates from \eqref{anf}, \eqref{hor}, \eqref{sum1}, \eqref{I_31f} and \eqref{I_3f} we conclude that
\begin{eqnarray}\label{firstvers}
\sum_{\substack{T<\gamma_a\leq T'}} x^{\delta_a}
&=&\dfrac{x^{{1-}\alpha}}{2\pi i\log x}M(x,T_1,T_2)+O\left(x^{\alpha}(\psi(T)+y(T)\log x)\right)+\nonumber\\
&&+~ O\left(x^{1-\alpha}\left(\dfrac{1}{\log x}+y(T)\log x\right)\right)
\end{eqnarray}
for any $T_0\leq T<T+1/\log T\leq T'\leq2T$ and $1<x\leq(1-1/y(T))\lambda Q^2T^{d_\mathcal{L}}$. 
Now observe that by our assumption on $x$ and $T_1,T_2$ we can deduce that
\[
x^{1-\alpha}=x^{1/2}\exp\left(\dfrac{-c\log x}{\log\left(\lambda Q^2T^{d_\mathcal{L}}\right)}\right)=x^{1/2}\left(1+O\left(\dfrac{\log x}{\log T}\right)\right)=x^\alpha
\]
and
\[
M(x,T_1,T_2)
=M(x,T,T')+O\left(\log x+\frac{1}{T\log T}\right)
\]
Therefore \eqref{firstvers} becomes
\begin{align*}
\sum_{\substack{T<\gamma_a\leq T'}} x^{\delta_a}=\dfrac{x^{1/2}M(x,T,T')}{2\pi i\log x}+O\left(x^{1/2}\left(\psi(T)+y(T)\log x+\dfrac{1}{\log x}\right)\right)
\end{align*}
for any $T_0\leq T<T+1/\log T\leq T'\leq2T$ and $1<x\leq(1-1/y(T))\lambda Q^2T^{d_\mathcal{L}}.$

The case of $1<1/x\leq(1-1/y(T))\lambda Q^2T^{d_\mathcal{L}}$ follows from the above relation and our observation in the end of the proof of Theorem \ref{RvMtheorem} that a complex number $z$ is an $a$-point of $\Delta_\mathcal{L}(s)$ (where $a\neq0$) if, and only if, the complex number $1-\overline{z}$ is a $b$-point of $\Delta_\mathcal{L}(s)$, where $b :=1/\overline{a}$ (see also \eqref{SymmetryIdentity1}). Hence for $0<x<1$, 
$$\dfrac{1}{x}\sum_{\substack{T<\gamma_a\leq T'}} x^{\delta_a}
= \sum_{\substack{T<\gamma_a\leq T'}}\left(\dfrac{1}{ x}\right)^{1-\delta_a}
= \overline{\sum_{\substack{T<\gamma_a\leq T'}}\left(\dfrac{1}{ x}\right)^{1-\overline{\delta_a}}}
= \overline{\sum_{\substack{T<\gamma_b\leq T'}}\left(\dfrac{1}{ x}\right)^{\delta_b}},$$
and thus
\[
\sum_{\substack{T<\gamma_a\leq T'}} x^{\delta_a}=\dfrac{x^{1/2}M(x,T,T')}{2\pi i(-\log x)}+O\left(x^{1/2}\left(\psi(T)-y(T)\log x-\dfrac{1}{\log x}\right)\right).
\]
\section{The Vertical Distribution of \texorpdfstring{$a$}{a}-points --- Proof of Corollary \ref{UDmod}}
For the proof of the corollary we need to introduce some notations.
Firstly, we will describe the $a$-points using two notations
$
\tilde{\beta}_a+i\tilde{\gamma}_a=\tilde{\delta}_a=\delta_a=\beta_a+i\gamma_a.
$
Moreover, $\delta_a'=\beta_a'+i\gamma_a'$ will denote the $a$-point whose imaginary part $\gamma_a'$ is the term succeeding $\gamma_a$ in the increasing sequence $(\gamma_a)_{\gamma_a>0}$.
Lastly, $\delta_a^T=\beta_a^T+i\gamma_a^T$ will denote the $a$-point whose imaginary part $\gamma_a^T$ is the last term of the sequence $(\gamma_a)_{\gamma_a>0}$ that is less than or equal to $2T$.

With the above notations in mind, we apply Abel's summation formula (summation by parts) to the sequence
\begin{eqnarray}\label{Abel}
\sum_{T<\gamma_a\leq 2T}x^{i\gamma_a}
&=&\sum_{T<\gamma_a\leq 2T}x^{-\beta_a}x^{\delta_a}\nonumber\\
&\ll&x^{-\beta_a^T}\left|\sum_{T<\gamma_a\leq2T}x^{\delta_a}\right|+\nonumber\\
&&+\mathop{\max_{T<\gamma_a\leq2T}}_{\gamma_a\neq\gamma_a^T}\left|\sum_{T<\tilde{\gamma}_a\leq\gamma_a}x^{\tilde{\delta}_a}\right|\mathop{\sum_{T<\gamma_a\leq2T}}_{\gamma_a\neq\gamma_a^T}\left|x^{-\beta_a'}-x^{-\beta_a}\right|.
\end{eqnarray}
Recall that 
\[
\beta_a=\dfrac{1}{2}-\dfrac{\log|a|}{\ell(\gamma_a)}+O\left(\dfrac{1}{\gamma_a\ell(\gamma_a)}\right),\quad\gamma_a>0,
\]
where $\ell(t)$ is defined in \eqref{abbreviation}, and 
$
|\log x|\leq\ell(T)
$
by our assumption \eqref{length}.
Therefore, we obtain for $\gamma_a>T$ that
\begin{equation}
x^{-\beta_a}=x^{-1/2}\exp\hspace{-1pt}\left(\dfrac{\log|a|\log x}{\ell(\gamma_a)}+O\left(\dfrac{|\log x|}{\gamma_a\ell(\gamma_a)}\right)\right)\ll x^{-1/2}\exp\hspace{-1pt}\left(\dfrac{|\log x|}{\ell(T)}\right)\ll x^{-1/2}
\end{equation}
and
\[
\beta_a'-\beta_a=\dfrac{\log|a|}{\ell(\gamma_a)}-\dfrac{\log|a|}{\ell(\gamma_a')}+O\left(\dfrac{1}{\gamma_a\ell(\gamma_a)}\right)\ll1.
\]
The above relations and the Riemann-von Mangoldt formula \eqref{RvM} imply that
\begin{eqnarray}\label{bound1}
\mathop{\sum_{T<\gamma_a\leq2T}}_{\gamma_a\neq\gamma_a^T}\left|x^{-\beta_a'}-x^{-\beta_a}\right|
&=&\mathop{\sum_{T<\gamma_a\leq2T}}_{\gamma_a\neq\gamma_a^T}x^{-\beta_a'}\left|x^{\beta_a'-\beta_a}-1\right|\nonumber\\
&\ll&x^{-1/2}\mathop{\sum_{T<\gamma_a\leq2T}}_{\gamma_a\neq\gamma_a^T}|\log x(\beta_a'-\beta_a)|\nonumber\\
&\ll&\dfrac{|\log x|}{x^{1/2}}\mathop{\sum_{T<\gamma_a\leq2T}}_{\gamma_a\neq\gamma_a^T}\left[\dfrac{1}{\ell(\gamma_a)}-\dfrac{1}{\ell(\gamma_a')}+O\left(\dfrac{1}{\gamma_a\ell(\gamma_a)}\right)\right]\nonumber\\
&\ll&\dfrac{|\log x|}{x^{1/2}}\left(\dfrac{1}{\ell(T)}-\dfrac{1}{\ell(2T)}+O(1)\right)\nonumber\\
&\ll&\dfrac{|\log x|}{x^{1/2}}.
\end{eqnarray}
The first statement \eqref{imagin} of the corollary  follows from the combination of Theorem \ref{Landau} and relations \eqref{Abel}-\eqref{bound1}.

Let now $\alpha\neq0$ be a real number and $k\neq0$ be an integer. 
If we set $x=\exp(2\pi\alpha k)$ and $y(t)=\psi(t)$, then for any sufficiently large $T>0$ we have that
\begin{eqnarray*}
\sum\limits_{T<\gamma_a\leq 2T}\exp\left(2\pi ik\alpha\gamma_a\right)
&=&\sum\limits_{T<\gamma_a\leq 2T}x^{i\gamma_{a}}\ll_x\log T
\end{eqnarray*}
as follows from the first statement  \eqref{imagin} of the corollary.
Thus, by splitting the interval $(0,T]$ in a dyadic manner $(T/2,T]$, $(T/4,T/2]$,..., we obtain for any sufficiently large $T>0$ that
\[
\sum\limits_{0<\gamma_a\leq T}\exp\left(2\pi ik\alpha\gamma_a\right)=O_x\left(\log T)^2\right)=o_x(T\log T).
\]
Since $k\in\mathbb{Z}\setminus\lbrace0\rbrace$ can be chosen arbitrarily, the sequence $\alpha\gamma_a$, $n\in\mathbb{N}$, satisfies Weyl's Criterion (see \cite{weyl}):
{\it a sequence $(x_n)_{n\in\mathbb{N}}$ is uniformly distributed modulo one if
	\begin{equation*}\sum\limits_{n\leq N}\exp(2\pi ikx_n)=o(N)
	\end{equation*}
	for any integer $k\neq0$}. This concludes the proof of the corollary.
\section{A Mean-Value Theorem --- Proof of Theorem \ref{moment}}

%%%%%%%%%%%%%%%%%%%%%
%%%CHACHA-EDIT STARTS%%%
%%%%%%%%%%%%%%%%%%%%%

In this section, we estimate
$$ \sum_{0<\gamma_a\le T}{\mathcal L}(\delta_a) $$
for $\mathcal{L}\in\mathcal{S}^\sharp$ with $d_\mathcal{L}\geq1$.
It suffices to treat only the case $0<|a|\le1$ and deduce the result for $|a|>1$ from the functional equation of $\mathcal{L}(s)$ and our observation at the end of Section \ref{SectRvMtheorem}.
%For the case $|a|>1$, we note that for $s$ such that $\Delta_{\mathcal L}(s)=a$, then $\Delta_{\mathcal L}(1-\overline{s}) = 1/\overline{a}$.
%Thus it suffices to treat only the case $0<|a|\le1$ and deduce the result for $|a|>1$ from this case using the functional equation.
%We will conclude the proof with the general case for all $a\neq0$.

Let $a\neq0$ be such that $|a|\leq1$ and $T>0$.
In view of Theorem \ref{RvMtheorem}, we may assume that $T$ is in some distance to the imaginary parts $\gamma_a$ of the $a$-points, $\vert T-\gamma_a\vert\geq C/\log T$ for some positive constant $C$, say.
As in the previous sections, or more straightforwardly by Lemma \ref{Value-DistributionDelta} and \eqref{SymmetryIdentity2}, for some $t_0>0$ depending only on ${\mathcal L}$, all $a$-points $\delta_a=\beta_a+i\gamma_a$ of $\Delta_{\mathcal L}$ with $|\gamma_a|\ge t_0$, satisfying $0<|a|\le1$ lie in $\sigma\ge1/2$.
Thus, for any fixed $\varepsilon=\varepsilon(\mathcal{L})>0$, let ${\mathcal C}$ be the counter-clockwise oriented contour with vertices $2+iT_0,2+iT,1/2-\varepsilon+iT,1/2-\varepsilon+iT_0$ with $T_0=T_0(\varepsilon,a,\mathcal{L})$ such that
\begin{equation} \label{t_a}
\left(\lambda Q^2|t|^{d_{\mathcal L}}\right)^{-\varepsilon} \le \frac1{2|a|}
\end{equation}
holds for all $|t|\ge T_0\ge t_0$, by Cauchy's theorem, we have
\begin{eqnarray} \label{moment_cauchy}
\sum_{0<\gamma_a\le T} {\mathcal L}(\delta_a)
&=& {1\over 2\pi i}\int_{\mathcal C}{\Delta_{\mathcal L}'(s)\over \Delta_{\mathcal L}(s)-a}{\mathcal L}(s) \d s + O_\varepsilon(1).
\end{eqnarray}
%\begin{eqnarray} \label{moment_cauchy}
%\sum_{\substack{0<\gamma_a\le T, \\ 0<|a|\le1}} {\mathcal L}(\delta_a)
%&=& {1\over 2\pi i}\int_{\mathcal C}{\Delta_{\mathcal L}'(s)\over \Delta_{\mathcal L}(s)-a}{\mathcal L}(s) \d s + O_\varepsilon(1), \nonumber\\
%&= &{1\over 2\pi i}
%\left( \int_{3+iT_0}^{3+iT} - \int_{1/2-\varepsilon+iT}^{3+iT}
%-\int_{1/2-\varepsilon+iT_0}^{1/2-\varepsilon+iT} \right. \nonumber\\
%&&+ \left. \int_{1/2-\varepsilon+iT_0}^{3+iT_0} \right)
%{\Delta_{\mathcal L}'(s)\over \Delta_{\mathcal L}(s)-a}{\mathcal L}(s) \d s + O(1).
%\end{eqnarray}
%
We remark that the left side of the contour ${\mathcal C}$ is taken to be as close as possible to $\sigma=1/2$, instead of any arbitrary line $\sigma=\sigma_0<1/2$ to overcome the growth of ${\mathcal L}(\sigma+it)$ as $\sigma$ decreases.
We further remark that the error term in \eqref{moment_cauchy} relies on the $a$-points $\delta_a=\beta_a+i\gamma_a$ to be considered in the sum on the left but excluded by the contour of integration on the right.

Before we begin, if we let $A\ge0$ be such that
$$ 
{\mathcal L}(1/2+it) \ll_\epsilon |t|^{A+\epsilon} \qquad \text{as}~ t\to\pm\infty, 
$$
then it follows from the Phragm\'en-Lindel\"of principle (see in \cite[\S 9.41]{titch}) that
\begin{equation} \label{esti}
{\mathcal L}(\sigma+it) \ll_\epsilon \left\{\begin{array}{ll}
t^{{d_\mathcal{L}}/2+(2A-d_\mathcal{L})\sigma+\epsilon} & \mbox{for}\quad 0\leq \sigma\leq 1/2,\\
t^{2A(1-\sigma)+\epsilon} & \mbox{for}\quad 1/2\le\sigma\leq 1,\\
t^\epsilon & \mbox{for}\quad \sigma\geq 1,\\
\end{array}\right.
\end{equation}
as $t\to\infty$.
For several $L$-functions, we have subconvexity bounds which may be used in place of $A$ to deduce a sharper bound.
In this paper, however, we leave it as a constant $A$ to include all general cases.

We begin our evaluations with the integral over the line segment within the half-plane of absolute convergence of ${\mathcal L}(s)$. Since
$$
{\Delta_{\mathcal L}'(s)\over \Delta_{\mathcal L}(s)-a}=\dfrac{\Delta_{\mathcal L}'}{\Delta_{\mathcal L}}(s)\cdot {1\over 1-a/\Delta_{\mathcal L}(s)}\ll t^{d_{\mathcal L}(1/2-\sigma)}\log t,
$$
applying Lemma \ref{StirlingsDelta} and \eqref{esti} gives
\begin{equation} \label{right_int}
\int_{2+iT_0}^{2+iT}{\Delta_{\mathcal L}'(s)\over \Delta_{\mathcal L}(s)-a}{\mathcal L}(s) \d s\ll _\epsilon\int_{T_0}^T t^{-3d_{\mathcal L}/2+\epsilon}\d t \ll_\epsilon 1.
\end{equation}

Next we consider the horizontal integrals. Taking into account Lemma \ref{fractionaldecomposition}, bounds \eqref{esti} and that $T$ is such that $\vert T-\gamma_a\vert\gg 1/\log T$, the integral over the upper horizontal line segment can be estimated as
\begin{eqnarray} \label{hori}
\lefteqn{\int_{1/2-\varepsilon+iT}^{2+iT} {\Delta_{\mathcal L}'(s)\over \Delta_{\mathcal L}(s)-a}{\mathcal L}(s) \d s \nonumber}\\
&= &\int_{1/2-\varepsilon+iT}^{2+iT} \left(\sum\limits_{|T-\gamma_a|\leq \frac1{\log\log{T}}}\dfrac{1}{s-\delta_a} + O\left(\log{T}\right) \right) {\mathcal L}(s) \d s \nonumber\\
&\ll_\epsilon& T^{A+(d_{\mathcal L}-2A)\varepsilon+\epsilon}\, (\log{T})^2 \int_{1/2-\varepsilon}^{2} 1 \d\sigma \nonumber\\
&\ll_\epsilon& T^{A+(d_{\mathcal L}-2A)\varepsilon+\epsilon}.
\end{eqnarray}
Meanwhile, the lower horizontal line segment is independent of $T$ and thus bounded.

It remains to evaluate the integral over the left vertical line segments
\begin{equation}\label{J}
J:=-{1\over 2\pi i} \int_{1/2-\varepsilon+iT_0}^{1/2-\varepsilon+iT} {\Delta_{\mathcal L}'(s)\over \Delta_{\mathcal L}(s)-a}{\mathcal L}(s) \d s.
\end{equation}
Applying (\ref{delta_i3_rewrite}) we have
\begin{eqnarray*}
J &=& -{1\over 2\pi i} \int_{1/2-\varepsilon+iT_0}^{1/2-\varepsilon+iT} \dfrac{\Delta_{\mathcal L}'}{\Delta_{\mathcal L}}(s) \sum_{k\geq 0} \left(\dfrac{a}{\Delta_{\mathcal L}(s)}\right)^k {\mathcal L}(s) \d s \\
&=& -{1\over 2\pi i} \sum_{k\geq 0} a^k \int_{1/2-\varepsilon+iT_0}^{1/2-\varepsilon+iT} \dfrac{\Delta_{\mathcal L}'}{\Delta_{\mathcal L}}(s) \left(\dfrac1{\Delta_{\mathcal L}(s)}\right)^k {\mathcal L}(s) \d s.
\end{eqnarray*}
Here we may interchange the order of integration and summation, since the sum is absolutely convergent.
Next we shift the contour to the right, but we will handle it differently for $k=0$ and $k\ge1$.

We first consider the case $k=0$,
$$
J_1 := -{1\over 2\pi i} \int_{1/2-\varepsilon+iT_0}^{1/2-\varepsilon+iT} \dfrac{\Delta_{\mathcal L}'}{\Delta_{\mathcal L}}(s) {\mathcal L}(s) \d s.
$$
Since the integrand is analytic on the upper half-plane, we may replace the integral above with
$$
\left\{\int_{1/2-\varepsilon+iT_0}^{2+iT_0}+\int_{2+iT_0}^{2+iT}+\int_{2+iT}^{1/2-\varepsilon+iT}\right\}\dfrac{\Delta_{\mathcal L}'}{\Delta_{\mathcal L}}(s) {\mathcal L}(s) \d s.
$$
As in \eqref{hori}, the integrals over the horizontal line segments are bounded by $T^{A+(d_{\mathcal L}-2A)\varepsilon+\epsilon}$ since Lemma \ref{StirlingsDelta} implies that
\begin{equation} \label{shift_right_hori1}
\dfrac{\Delta_{\mathcal L}'}{\Delta_{\mathcal L}}(\sigma+iT) \asymp \log{T}, \qquad \text{for } \sigma\in[0,2]
\end{equation}
while the bound \eqref{esti} yields
\begin{equation} \label{shift_right_hori2}
\mathcal{L}(\sigma+iT) \ll_\epsilon T^{A+(d_{\mathcal L}-2A)\varepsilon+\epsilon}, \qquad \text{for } \sigma\geq 1/2-\varepsilon.
\end{equation}
To estimate the integral on the line $\sigma=2$, we use the Dirichlet series representation for ${\mathcal L}(s)$.
Taking into account Lemma \ref{StirlingsDelta}, and interchanging integration and summation (which is allowed by the absolute convergence of the series), we obtain
\begin{eqnarray*}
J_1 &=& -{1\over 2\pi} \int_{T_0}^{T} \dfrac{\Delta_{\mathcal L}'}{\Delta_{\mathcal L}}(2+it){\mathcal L}(2+it)\d t + O_\epsilon\left(T^{A+(d_{\mathcal L}-2A)\varepsilon+\epsilon}\right) \\
&=& {1\over 2\pi}\sum_{n\geq 1} \frac{f(n)}{n^2} \int_{T_0}^T \left(\log\Big(\lambda Q^2t^{d_{\mathcal L}}\Big) + O\left(\frac1t\right)\right) \exp(-it\log n)\d t \\
&&+~ O_\epsilon\left(T^{A+(d_{\mathcal L}-2A)\varepsilon+\epsilon}\right).
\end{eqnarray*}
The constant term $f(1)$ of the Dirichlet series yields a contribution to the main term while for $n\geq 2$, a straightforward application of the first derivative test \eqref{firstderivativetest} gives 
$$
\int_{T_0}^T \left(\log\Big(\lambda Q^2t^{d_{\mathcal L}}\Big) + O\left(\frac1t\right)\right) \exp(-it\log n)\d t \ll {\log T\over \log n}+\log T,\quad n\geq2.
$$
Hence,
\begin{equation}\label{j1}
J_1 = f(1){T\over 2\pi}\log\left(\lambda Q^2\Big({T\over e}\Big)^{d_{\mathcal L}}\right) + O_\epsilon\left(T^{A+(d_{\mathcal L}-2A)\varepsilon+\epsilon}\right).
\end{equation}

Now when $k\ge1$, that is for
$$
J_2 := -{1\over 2\pi i} \sum_{k\geq 1} a^k \int_{1/2-\varepsilon+iT_0}^{1/2-\varepsilon+iT} \dfrac{\Delta_{\mathcal L}'}{\Delta_{\mathcal L}}(s) \left(\dfrac1{\Delta_{\mathcal L}(s)}\right)^k {\mathcal L}(s) \d s,
$$
the situation gets slightly more complicated.
We first rewrite the integrand as
\begin{eqnarray*}
	\dfrac{\Delta_{\mathcal L}'}{\Delta_{\mathcal L}}(s) \left(\dfrac1{\Delta_{\mathcal L}(s)}\right)^k {\mathcal L}(s)
	&=&	\dfrac{\Delta_{\mathcal L}'}{\Delta_{\mathcal L}}(s) \left(\dfrac1{\Delta_{\mathcal L}(s)}\right)^{k-1} \overline{{\mathcal L}(1-\overline{s})}\\
	&=& \overline{\dfrac{\Delta_{\mathcal L}'}{\Delta_{\mathcal L}}(1-\overline{s}) \left({\Delta_{\mathcal L}(1-\overline{s})}\right)^{k-1} {\mathcal{L}(1-\overline{s})}}
\end{eqnarray*}
using the functional equations (for ${\mathcal L}(s)$ as well as the one for the logarithmic derivative of $\Delta_{\mathcal L}(s)$ which follows directly from \eqref{SymmetryIdentity1}).
Therefore,
%For $\sigma\in[1/2-\varepsilon,2]$, we have $1-\sigma\in[-1,1/2+\varepsilon]$, therefore
$$
\overline{J_2} =- {1\over 2\pi i} \sum_{k\geq 1} \overline{a}^k \int_{1/2+\varepsilon+iT_0}^{1/2+\varepsilon+iT}
{\dfrac{\Delta_{\mathcal L}'}{\Delta_{\mathcal L}}(s)} \Delta_{\mathcal L}(s)^{k-1} \mathcal{L}(s) \d s.
$$
We now shift the line of integration to the right and get
\begin{equation} \label{shift_right}
\overline{J_2} = -{1\over 2\pi i} \sum_{k\geq 1} \overline{a}^k \left\{\int_{1/2+\varepsilon+iT_0}^{2+iT_0}+\int_{2+iT_0}^{2+iT}+\int_{2+iT}^{1/2+\varepsilon+iT}\right\} \dfrac{\Delta_{\mathcal L}'}{\Delta_{\mathcal L}}(s) \Delta_{\mathcal L}(s)^{k-1} {\mathcal L}(s) \d s.
\end{equation}
We again recall that Lemma \ref{StirlingsDelta} implies that for $\sigma\in[1/2+\varepsilon,2]$,
$$ \Delta_{\mathcal L}(s) \ll \left(\lambda Q^2|t|^{d_\mathcal{L}}\right)^{-\varepsilon} \le \dfrac1{2|a|} $$
by \eqref{t_a}.
Thus, the infinite sum of the horizontal integrals can again be bounded using \eqref{shift_right_hori1} and \eqref{shift_right_hori2} as
\begin{eqnarray*}
\sum_{k\geq 1} \overline{a}^k \int_{1/2+\varepsilon+iT}^{2+iT}
\dfrac{\Delta_{\mathcal L}'}{\Delta_{\mathcal L}}(s) \Delta_{\mathcal L}(s)^{k-1} {\mathcal L}(s)
&\ll_\epsilon& T^{A+(d_{\mathcal L}-2A)\varepsilon+\epsilon} \sum_{k\geq 1} |a|^k \left(\dfrac1{2|a|}\right)^{k-1} \\
&\ll_\epsilon& T^{A+(d_{\mathcal L}-2A)\varepsilon+\epsilon} \sum_{k\geq 1} \left(\dfrac1{2}\right)^{k-1} \\
&\ll_\epsilon& T^{A+(d_{\mathcal L}-2A)\varepsilon+\epsilon},
\end{eqnarray*}
for any $T\geq T_0$. Thus we arrive at
\begin{eqnarray*}
\overline{J_2} &=& -{1\over 2\pi i} \sum_{k\geq 1} \overline{a}^k \int_{2+iT_0}^{2+iT} \dfrac{\Delta_{\mathcal L}'}{\Delta_{\mathcal L}}(s) \Delta_{\mathcal L}(s)^{k-1} {\mathcal L}(s) \d s
+ O_\epsilon\left(T^{A+(d_{\mathcal L}-2A)\varepsilon+\epsilon}\right) \\
&=& {1\over 2\pi} \sum_{k\geq 1} \overline{a}^k \int_{T_0}^{T}
\left( \log\left(\lambda Q^2t^{d_\mathcal{L}}\right) + O\left( \dfrac{\log{t}}{t} \right) \right)
\Delta_{\mathcal L}(2+it)^{k-1} {\mathcal L}(2+it) \d t \\
&&+~ O_\epsilon\left(T^{A+(d_{\mathcal L}-2A)\varepsilon+\epsilon}\right) \\
&=& {1\over 2\pi} \sum_{k\geq 1} \overline{a}^k \sum_{n=1}^\infty \dfrac{f(n)}{n^2}
\int_{T_0}^{T} \log\left(\lambda Q^2t^{d_\mathcal{L}}\right) \exp(-it\log{n}) \Delta_{\mathcal L}(2+it)^{k-1} \d t \\
&&+~ O_\epsilon\left(T^{A+(d_{\mathcal L}-2A)\varepsilon+\epsilon}\right),
\end{eqnarray*}
where we again used the Dirichlet series representation for ${\mathcal L}(s)$ and interchanged integration and summation.
As in the estimation of $J_1$, only the constant term $f(1)$ of the Dirichlet series yields a contribution to the main term when $k=1$, while for $k\ge2$, $\Delta_{\mathcal L}(2+it)$ is very small (recall Lemma \ref{StirlingsDelta}) so that
%\begin{eqnarray*}
%\lefteqn{\dfrac{1}{2\pi}\sum_{k\geq 2} \overline{a}^k \sum_{n=1}^\infty \dfrac{f(n)}{n^2}\int_{T_0}^{T} \log\left(\lambda Q^2t^{d_\mathcal{L}}\right) \exp(-it\log{n}) \Delta_{\mathcal L}(2+it)^{k-1} \d t}\nonumber\\
%&\ll&\sum_{k\geq 1} |{a}|^k \sum_{n=1}^\infty \dfrac{|f(n)|}{n^2} \int_{T_0}^{T} t^{-3d_\mathcal{L}k/2}\log t \d t\nonumber\hspace{4cm}\\
%	&\ll&\sum_{k\geq 1} |{a}|^kT_0^{1-4k/3}\nonumber\\
%	&\ll&1
%\end{eqnarray*}
\begin{eqnarray*}
\lefteqn{\dfrac{1}{2\pi}\sum_{k\geq 2} \overline{a}^k \sum_{n=1}^\infty \dfrac{f(n)}{n^2} \int_{T_0}^{T} \log\left(\lambda Q^2t^{d_\mathcal{L}}\right) \exp(-it\log{n}) \Delta_{\mathcal L}(2+it)^{k-1} \d t} \\
	&\ll& \sum_{k\geq 1} |{a}|^k \sum_{n=1}^\infty \dfrac{|f(n)|}{n^2}
	\int_{T_0}^{T} \left(\lambda Q^2t^{d_\mathcal{L}}\right)^{-3k/2} \log{t} \d t \hspace{33mm}\\
%	&\ll& \sum_{k\geq 1} |{a}|^k \left(\lambda Q^2\right)^{-3k/2} \int_{T_0}^{T} t^{-3d_\mathcal{L}k/2}\log{t} \d t \\
	&\ll& \sum_{k\geq 1} |{a}|^k \left(\lambda Q^2T_0^{d_\mathcal{L}}\right)^{-3k/2} T_0\log{T_0} \\
	&\ll& 1
\end{eqnarray*}
for $T_0>0$ sufficiently large (recall \eqref{t_a}).
%$$
%\sum_{k\geq 2} \overline{a}^k \sum_{n=1}^\infty \dfrac{f(n)}{n^2}
%\int_{T_0}^{T} \log\left(\lambda Q^2t^{d_\mathcal{L}}\right) \exp(-it\log{n}) \Delta_{\mathcal L}(2+it)^{k-1} \d t \ll_\epsilon T^\epsilon.
%$$
Hence we obtain
\begin{eqnarray} \label{j2}
J_2 = a\overline{f(1)}{T\over 2\pi}\log\left(\lambda Q^2\Big({T\over e}\Big)^{d_{\mathcal L}}\right)
+ O_\epsilon\left(T^{A+(d_{\mathcal L}-2A)\varepsilon+\epsilon}\right).
\end{eqnarray}
Combining \eqref{j1} and \eqref{j2} we deduce
\begin{eqnarray}\label{Jfinal}
J &=& J_1+J_2 \nonumber\\
&=& (f(1)+a\overline{f(1)}){T\over 2\pi}\log\left(\lambda Q^2\left({T\over e}\right)^{d_{\mathcal L}}\right) 
+ O_\epsilon\left(T^{A+(d_{\mathcal L}-2A)\varepsilon+\epsilon}\right).
\end{eqnarray}

Substituting \eqref{right_int}, \eqref{hori}, \eqref{J} and \eqref{Jfinal} into \eqref{moment_cauchy}, 
and rewriting the main term with that of Theorem \ref{RvMtheorem}, we arrive at
\begin{eqnarray}\label{final1}
\sum_{0<\gamma_a\le T} {\mathcal L}(\delta_a)
= (f(1)+a\overline{f(1)})N_+(T;a,\Delta_\mathcal{L})
+ O_\epsilon\left(T^{A+(d_{\mathcal L}-2A)\varepsilon+\epsilon}\right).
\end{eqnarray}
Since $\varepsilon>0$ can be taken arbitrarily small depending only on $\mathcal{L}$, the error term is bounded from above by
$O_\epsilon (T^{A+\epsilon}). $

If $T$ is not in some distance to all $a$-points, then we choose $T'\in[T-1,T]$ to be such a number.
In view of Theorem \ref{RvMtheorem}, our assumption on the order of $\mathcal{L}(1/2+it)$ and \eqref{final1} yields
\begin{eqnarray*}
\sum_{0<\gamma_a\le T} {\mathcal L}(\delta_a)
&=& \sum_{0<\gamma_a\le T'} {\mathcal L}(\delta_a) + \sum_{T'<\gamma_a\le T} {\mathcal L}(\delta_a) \\
&=& (f(1)+a\overline{f(1)})N_+(T';a,\Delta_\mathcal{L})
+ O_\epsilon\left((T')^{A+\epsilon}\right)+O_\epsilon(T^{A+\epsilon})\\
&=& (f(1)+a\overline{f(1)})N_+(T;a,\Delta_\mathcal{L})
+ O_\epsilon\left(T^{A+\epsilon}\right)
\end{eqnarray*}

Lastly if $|a|>1$, we set $b=1/\overline{a}$ and we recall that a complex number $z$ is an $a$-point of $\Delta_\mathcal{L}(s)$ if, and only if, the complex number $1-\overline{z}$ is a $b$-point of $\Delta_\mathcal{L}(s)$. Thus using the functional equation for $\mathcal{L}(s)$ and employing \eqref{final1} for $0<|b|<1$, we have
\begin{eqnarray*}
	\sum_{\substack{0<\gamma_a\le T}} {\mathcal L}(\delta_a)
	&=& \sum_{\substack{0<\gamma_a\le T}} \Delta_{\mathcal L}(\delta_a) \overline{{\mathcal L}(1-\overline{\delta_a})} \\
	&=& a \sum_{\substack{0<\gamma_b\le T}} \overline{{\mathcal L}(\delta_b)}\\
	&=& a\overline{(f(1)+b\overline{f(1)})N_+(T;b,\mathcal{L})} + O_\epsilon\left(T^{A+\epsilon}\right) \\
	&=& (a\overline{f(1)}+f(1)) N_+(T;a,\mathcal{L}) + O_\epsilon\left(T^{A+\epsilon}\right).
\end{eqnarray*}
This concludes the proof of the theorem.

\section{Concluding Remarks}
As we already mentioned in the introduction, one could undergo the same investigations besides the extended Selberg class for other classes of Dirichlet series. In view of Hecke's theory \cite{hecke} one could as well consider a Riemann-type functional equation where on the right hand side conjugation is dropped (as they appear in context of the Hecke groups generated by $\tau\mapsto \tau+\lambda$ and $\tau\mapsto -1/\tau$ for values of positive $\lambda\neq 1$); for these modified Selberg classes we refer to Kaczorowski et al. \cite{kac}. Further, there are Dirichlet series satisfying other variations of the Riemann functional equation, e.g., Dirichlet series associated with even or odd periodic arithmetical functions, as investigated in \cite{soursteusur}. 
The same methods, however, may be applied to obtain analogous results. 

We conclude with a historical remark. The very origin of Riemann-type functional equations lies already in the work of Leonhard Euler (whose investigations were limited to the real ine). For a historical account and interesting details we refer to Andre Weil \cite{weil} and Nicola Oswald \cite{nico}.

%\newpage
\appendix

\section{Quotients of Gamma Factors}
\renewcommand{\theequation}{A.\arabic{equation}}
\setcounter{equation}{0}

Since $\Delta_\mathcal{L}(s)$ is mainly a product of quotients of Gamma factors, Stirling's formula plays a prominent role in the study of its value-distribution:
\begin{equation}\label{Stirling'sformula2}
\log\Gamma(s)=\left(s-\dfrac{1}{2}\right)\log s-s+\dfrac{1}{2}\log2\pi+\dfrac{1}{12s}-2\int_0^\infty\dfrac{P_3(x)}{(s+x)^3}\mathrm{d}x;
\end{equation}
here $\log s$ is the principal branch of the logarithm and $P_3(x)$ is a $1$-periodic function which is equal to $x\left(2x^2-3x+1\right)/12$ on $[0,1]$
(see for example \cite[Chapter XV, \S 2, pp. 422--430]{lang}).
Notice that, for any $0<\epsilon\leq\pi$, the above integral is $O_\epsilon\left(|s|^{-2}\right)$ in the angular space $W_\epsilon:=\left\{s\in\mathbb{C}:|\mathrm{arg} s|\leq\pi-\epsilon\right\}$.
Hence, the following formulas hold in $W_\epsilon$:
\begin{eqnarray}
\label{StirlingGamma1}\log\Gamma(s)&=&\left(s-\dfrac{1}{2}\right)\log s-s+\dfrac{1}{2}\log2\pi+O_\epsilon\left(\dfrac{1}{|s|}\right),\\
\label{StirlingGamma2}\dfrac{\Gamma'}{\Gamma}(s)&=&\log s-\dfrac{1}{2s}+O_\epsilon\left(\dfrac{1}{|s|^2}\right).
\end{eqnarray}
We employ these formulas to prove the following lemma.
A similar result was given in \cite[Lemma 6.7]{steudi}, but it is unfortunately incorrect.
Here we give a more rigorous proof and obtain the correct statement.

\begin{Lem}[Stirling's formula]\label{StirlingsDelta}
If $\sigma$ is from an interval of bounded width, then
\begin{equation*}
\Delta_\mathcal{L}(\sigma+it)=\left(\lambda Q^2t^{d_\mathcal{L}}\right)^{1/2-\sigma-it}\exp\left(i\left(d_\mathcal{L}t-\Theta\log t\right)\right)\omega^*\left(1+O\left(\dfrac{1}{t}\right)\right)
\end{equation*}
for any $t\geq t_0>0$,
where 
\[
\omega^*:=\omega\exp\left(-\frac{\pi i}{4}\left({d_\mathcal{L}}+2\eta\right)\right)\prod\limits_{j\leq r}\lambda_j^{-2i\Im\mu_j}
\]
is a complex number from the unit circle and $d_\mathcal{L},\lambda$ and $\eta+i\Theta$ are defined in \eqref{dlm}.
In addition,
\begin{equation*}
\dfrac{\Delta_\mathcal{L}'}{\Delta_\mathcal{L}}(\sigma+it)=-\log\left(\lambda Q^2|t|^{d_\mathcal{L}}\right)-\dfrac{\Theta}{t}+\dfrac{d_\mathcal{L}\left(\frac{1}{2}-\sigma\right)i}{t}+O\left(\dfrac{1}{t^2}\right)
\end{equation*}
for any $|t|\geq t_0>0$.
\end{Lem}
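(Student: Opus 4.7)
The natural approach is to take logarithms and apply Stirling's formula to each of the $2r$ gamma factors in the product defining $\Delta_\mathcal{L}$. I would work with $t>0$ first; the case $t<0$ then follows from the identity $\Delta_\mathcal{L}(s)\overline{\Delta_\mathcal{L}(1-\overline{s})}=1$ that appears in the body of the paper as \eqref{SymmetryIdentity1}. Writing $z_j^+:=\lambda_j s+\mu_j$ and $z_j^-:=\lambda_j(1-s)+\overline{\mu_j}$, the definition of $\Delta_\mathcal{L}$ yields
\begin{equation*}
\log\Delta_\mathcal{L}(s)=\log\omega+(1-2s)\log Q+\sum_{j\leq r}\bigl[\log\Gamma(z_j^-)-\log\Gamma(z_j^+)\bigr].
\end{equation*}
A useful symmetry is $z_j^++z_j^-=\lambda_j+2\Re\mu_j\in\mathbb{R}$, so that $\Im z_j^\pm=\pm(\lambda_j t+\Im\mu_j)$; for $\sigma$ in a bounded interval and $t$ large, this places $z_j^+$ close to the positive imaginary axis and $z_j^-$ close to the negative imaginary axis, so that both arguments lie in the angular region $W_\epsilon$ where Stirling's formula applies.

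The central computational step is the expansion of $\log z_j^\pm$: factoring $\pm i(\lambda_j t+\Im\mu_j)$ out of $z_j^\pm$ and using $\log(1+w)=w+O(w^2)$ for small $w$ gives
\begin{equation*}
\log z_j^\pm=\log(\lambda_j t)+\frac{\Im\mu_j}{\lambda_j t}\pm\frac{i\pi}{2}\mp\frac{i(\lambda_j\sigma+\Re\mu_j)}{\lambda_j t}+O(t^{-2}).
\end{equation*}
Substituting into \eqref{StirlingGamma1} and collecting contributions to $\log\Delta_\mathcal{L}(s)$, I expect the $\log t$ powers to assemble into $(\lambda Q^2 t^{d_\mathcal{L}})^{1/2-\sigma-it}$ via the identities $\sum\lambda_j=d_\mathcal{L}/2$ and $2\sum\lambda_j\log\lambda_j=\log\lambda$; the linear-in-$t$ terms to combine into $\exp(i(d_\mathcal{L}t-\Theta\log t))$ via $\Theta=2\sum\Im\mu_j$; and the surviving bounded phases to produce $\omega^*$, with the factor $\exp(-i\pi(d_\mathcal{L}+2\eta)/4)$ arising from accumulating the many $\pm i\pi/2$ contributions coming from $\log(\pm i)$, and $\prod_j\lambda_j^{-2i\Im\mu_j}$ coming from the cross terms between the constants $\log\lambda_j$ and the imaginary shifts $\Im\mu_j$. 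The formula for $\Delta_\mathcal{L}'/\Delta_\mathcal{L}$ follows along the same lines using \eqref{StirlingGamma2}: after forming $\tfrac{\Gamma'}{\Gamma}(z_j^+)+\tfrac{\Gamma'}{\Gamma}(z_j^-)$, the $\pm i\pi/2$ terms from $\log z_j^\pm$ and the $\mp i/(2\lambda_j t)$ from $-1/(2z_j^\pm)$ cancel by symmetry, while the $\Im\mu_j/(\lambda_j t)$ and $\mp i(\lambda_j\sigma+\Re\mu_j)/(\lambda_j t)$ parts add to produce the $-\Theta/t$ and $d_\mathcal{L}(1/2-\sigma)i/t$ terms claimed in the lemma.

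The main obstacle is the careful bookkeeping of complex phases and of error orders: one must consistently track the principal branch of the logarithm as the argument moves between the upper and lower half planes (switching $\log i$ and $\log(-i)$), keep both $O(t^{-1})$ and $O(t^{-2})$ error terms uniform across all $r$ factors, and verify that the bounded-phase constant genuinely lies on the unit circle. Stirling's natural error $O_\epsilon(|s|^{-2})$ translates uniformly into $O(t^{-2})$ for $\sigma$ in a bounded interval since $|z_j^\pm|\asymp\lambda_j|t|$ there, which justifies the error bounds claimed in both parts of the lemma.
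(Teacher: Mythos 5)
Your approach --- take logarithms, apply Stirling's formula to each Gamma factor, and collect the $t\log t$, $t$, $\log t$, constant and $O(1/t)$ contributions --- is essentially the paper's proof. The paper organizes the bookkeeping a little differently: it first invokes $\Gamma(\overline{z})=\overline{\Gamma(z)}$ to rewrite the numerator $\Gamma(\lambda_j(1-s)+\overline{\mu_j})$ as $\overline{\Gamma\bigl(\lambda_j(1-\sigma)+\Re\mu_j+i(\lambda_jt+\Im\mu_j)\bigr)}$, so Stirling only ever has to be evaluated in the upper half-plane at points sharing the imaginary part $\lambda_jt+\Im\mu_j$, and it then isolates the $\Im\mu_j$ shift into the clean auxiliary identity $(at+b)^{c-2i(at+b)}=(at)^{c-2i(at+b)}e^{-2ib}(1+O(1/t))$. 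One slip to fix in your sketch: your displayed expansion of $\log z_j^{\pm}$ has $\mp i(\lambda_j\sigma+\Re\mu_j)/(\lambda_jt)$, which is correct only for $z_j^{+}$; for $z_j^{-}$ the real part is $\lambda_j(1-\sigma)+\Re\mu_j$, not $\lambda_j\sigma+\Re\mu_j$ (your own symmetry gives $\Re z_j^{+}+\Re z_j^{-}=\lambda_j+2\Re\mu_j$), and carrying the right real part through is exactly what produces the $d_\mathcal{L}(\tfrac12-\sigma)i/t$ term in the logarithmic-derivative formula.
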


\noindent The asymptotics of this lemma imply that all $a$-points $\delta_a=\beta_a+i\gamma_a$ with sufficiently large $\vert \gamma_a\vert$ are simple (since $\Delta_{\mathcal L}'$ is vanishing only in neighbourhood of the real axis). Hence, the Riemann-von Mangoldt formula from Theorem \ref{RvMtheorem} counts simple $a$-points. 

\begin{proof}
Formula \eqref{StirlingGamma1} implies that
$$\Gamma(\sigma+it)=\sqrt{2\pi}t^{\sigma+it-1/2}\exp\left(-\dfrac{\pi t}{2}-it+\dfrac{\pi i}{2}\left(\sigma-\dfrac{1}{2}\right)\right)\left(1+O\left(\dfrac{1}{t}\right)\right)$$
 for any $\sigma$ from an interval of bounded width and any $t\geq t_0>0$. 
With the aid of this expression and the reflection property of the Gamma function $\Gamma(\sigma-it)=\overline{\Gamma(\sigma+it)}$,
we can now rewrite $\Delta_\mathcal{L}(s)$ as follows
\begin{eqnarray}
\Delta_\mathcal{L}(s)&=&\omega Q^{1-2s}\prod\limits_{j\leq r}\dfrac{\Gamma\left(\lambda_j(1-s)+\overline{\mu_j}\right)}{\Gamma\left(\lambda_js+\mu_j\right)}\nonumber\\
&=&\omega Q^{1-2s}\prod\limits_{j\leq r}\dfrac{\overline{\Gamma\left(\lambda_j(1-\sigma)+\Re{\mu_j}+i\left(\lambda_jt+\Im\mu_j\right)\right)}}{\Gamma\left(\lambda_j\sigma+\Re\mu_j+i\left(\lambda_jt+\Im\mu_j\right)\right)}\nonumber\\
&=&\omega Q^{1-2s}\prod\limits_{j\leq r}\left(\lambda_jt+\Im\mu_j\right)^{\lambda_j(1-2\sigma)-2i\left(\lambda_jt+\Im\mu_j\right)}\times\nonumber\\
&&\times\exp\left(2i\left(\lambda_jt+\Im\mu_j\right)-\dfrac{\pi i}{2}\left(\lambda_j+2\Re\mu_j-1\right)\right)\left(1+O\left(\dfrac{1}{t}\right)\right)\nonumber.
\end{eqnarray}
Here we note that if $a>0$, $b$ and $c$ are fixed real numbers, then
\begin{eqnarray*}
(at+b)^{c-2i(at+b)}&=&\exp\left(\left(c-2i(at+b)\right)\log(at+b)\right)\\
&=&\exp\left(\left(c-2i(at+b)\right)\left(\log(at)+\frac{b}{at}+O\left(\dfrac{1}{t^2}\right)\right)\right)\\
&=&\exp\left(\left(c-2i(at+b)\right)\log(at)-2ib+O\left(\dfrac{1}{t}\right)\right)\\
&=&(at)^{c-2i(at+b)}e^{-2ib}\left(1+O\left(\dfrac{1}{t}\right)\right)
\end{eqnarray*}
for any $t\geq t_0>0$, where the implicit constant in the $O$-term depends on $a,b$ and $c$.
Therefore,
\begin{eqnarray*}
\Delta_\mathcal{L}(s)&=&\omega\left(Q^2\right)^{1/2-\sigma-it}\prod\limits_{j\leq r}(\lambda_jt)^{2\lambda_j(1/2-\sigma-it)-2i\Im\mu_j}e^{-2i\Im\mu_j}\times\nonumber\\
&&\times\exp\left(2i\left(\lambda_jt+\Im\mu_j\right)-\dfrac{\pi i}{2}\left(\lambda_j+2\Re\mu_j-1\right)\right)\left(1+O\left(\dfrac{1}{t}\right)\right)\nonumber\\
&=&\left(\lambda Q^2t^{d_\mathcal{L}}\right)^{1/2-\sigma-it}\exp\left(i\left(d_\mathcal{L}t-\Theta\log t\right)\right)\omega^*\left(1+O\left(\dfrac{1}{t}\right)\right)
\end{eqnarray*}
for any $\sigma$ from an interval of bounded width and any $t\geq t_0>0$.

Formula \eqref{StirlingGamma2} implies for any $\sigma$ from an interval of bounded width and any $|t|\geq t_0>0$ that
\begin{eqnarray}\label{logarithmicderiv}
\dfrac{\Delta'_\mathcal{L}}{\Delta_\mathcal{L}}(s)&=&-2\log Q-\sum\limits_{j\leq r}\lambda_j\left(\dfrac{\Gamma'}{\Gamma}\left(\lambda_j(1-s)+\overline{\mu_j}\right)+\dfrac{\Gamma'}{\Gamma}\left(\lambda_js+{\mu_j}\right)\right)\nonumber\\
&=&-2\log Q-\sum\limits_{j\leq r}\lambda_j\left(\log\left(\lambda_j(1-s)+\overline{\mu_j}\right)+\log\left(\lambda_js+{\mu_j}\right)\right)+\nonumber\\
&&+~ \dfrac{1}{2}\sum\limits_{j\leq r}\lambda_j\left(\dfrac{1}{\lambda_j(1-s)+\overline{\mu_j}}+\dfrac{1}{\lambda_js+{\mu_j}}\right)+O_{}\left(\dfrac{1}{t^2}\right)\nonumber\\
&=:&-2\log Q-\sum\limits_{j\leq r}\lambda_jF_j(s)+\dfrac{1}{2}\sum\limits_{j\leq r}\lambda_jG_j(s)+O_{}\left(\dfrac{1}{t^2}\right).
\end{eqnarray}
Treating each summand seperately, we obtain for every $j=1,\dots,r$ and $|t|\geq t_0$ that
\begin{eqnarray}
F_j(s)
&=&\log\left(\lambda_j(1-s)\right)+\log\left(1+\dfrac{\overline{\mu_j}}{\lambda_j(1-s)}\right)+\log\left(\lambda_js\right)+\nonumber\\
&&+~ \log\left(1+\dfrac{\mu_j}{\lambda_js}\right)\nonumber\\
&=&\log\left(\lambda_j(1-s)\right)+\dfrac{\overline{\mu_j}}{\lambda_j(1-s)}+\log\left(\lambda_js\right)+\dfrac{\mu_j}{\lambda_js}+O_{}\left(\dfrac{1}{t^2}\right)\nonumber\\
&=&\log\left|\lambda_j(1-s)\right|+\log\left|\lambda_js\right|+i\arg\left(s\right)+i\arg\left(1-s\right)+\nonumber\\
&&+~ \dfrac{2i\mu_j}{\lambda_j(s-1)}+O\left(\dfrac{1}{t^2}\right)\nonumber\\
&=&2\log\lambda_j+\log|1-s|+\log|s|+\dfrac{2\mu_j}{\lambda_jt}+\nonumber\\
&&-~ i\left(\arctan\left(\dfrac{\sigma}{t}\right)+\arctan\left(\dfrac{1-\sigma}{-t}\right)\right)+O_{}\left(\dfrac{1}{t^2}\right)\nonumber\\
&=&2\log\lambda_j+2\log |t|+\dfrac{2\mu_j}{\lambda_jt}+\dfrac{(1-2\sigma)i}{t}+O_{}\left(\dfrac{1}{t^2}\right)
\end{eqnarray}
and
\begin{eqnarray}\label{Gj}
G_j(s)
=\dfrac{\mu_j+\overline{\mu_j}+\lambda_j}{\left(\lambda_j(1-s)+\overline{\mu_j}\right)\left(\lambda_js+{\mu_j}\right)}
\ll_{}\dfrac{1}{t^2}.
\end{eqnarray}
It now follows from \eqref{logarithmicderiv}--\eqref{Gj} that
\begin{eqnarray*}
\dfrac{\Delta'_\mathcal{L}}{\Delta_\mathcal{L}}(s)
=-\log\left(\lambda Q^2|t|^{d_\mathcal{L}}\right)-\dfrac{\Theta}{t}+\dfrac{d_\mathcal{L}\left(\frac{1}{2}-\sigma\right)i}{t}+O\left(\dfrac{1}{t^2}\right)
\end{eqnarray*}
for any $\sigma$ from an interval of bounded width and $|t|\geq t_0>0$.
\end{proof}

The next lemma originates from the work of Robert Spira \cite{spira}, Robert Dan Dixon \& Lowell Schoenfeld \cite{dixsch} and Berndt \cite{berndt}. The proof is very similar to the one provided from the last one \cite[Theorem 12]{berndt} and
we only give a brief sketch of it here by adding a few new details where needed. 

\begin{Lem}\label{Value-DistributionDelta}
There exists $t_0=t_0\left(\mathcal{L}\right)>0$ such that
\begin{eqnarray*}
\left|\Delta_\mathcal{L}(s)\right|<1,\qquad\left|\Delta_\mathcal{L}(1-s)\right|>1\qquad\text{and}\qquad\left|\Delta_\mathcal{L}\left(\dfrac{1}{2}+it\right)\right|=1
\end{eqnarray*}
 for any $|t|\geq t_0>0$ and $\sigma>1/2$.
\end{Lem}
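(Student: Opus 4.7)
The statement comprises three assertions. The equality on the critical line is immediate from \eqref{SymmetryIdentity1}: specialising to $s=1/2+it$ we have $1-\overline{s}=s$, so $\Delta_\mathcal{L}(s)\overline{\Delta_\mathcal{L}(s)}=|\Delta_\mathcal{L}(s)|^{2}=1$. (Since $\Re\mu_j\geq 0$ and $\lambda_j>0$, the potential poles of $\Delta_\mathcal{L}$ arising from the numerator Gamma-factors do not lie on the critical line, so the expression is well defined.) The inequality $|\Delta_\mathcal{L}(1-s)|>1$ will reduce to the inequality $|\Delta_\mathcal{L}(s)|<1$: applying \eqref{SymmetryIdentity1} at $z=1-s$ one has $1-\overline{z}=\sigma-it$, hence $|\Delta_\mathcal{L}(1-s)|\,|\Delta_\mathcal{L}(\sigma-it)|=1$. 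Since $(\sigma,-t)$ still satisfies $\sigma>1/2$ and $|{-t}|=|t|\geq t_0$, the upper bound $|\Delta_\mathcal{L}(\sigma-it)|<1$ then forces $|\Delta_\mathcal{L}(1-s)|>1$. Hence the substantive content of the lemma is the strict bound $|\Delta_\mathcal{L}(\sigma+it)|<1$ for $\sigma>1/2$ and $|t|\geq t_0$.

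For that bound I would run a monotonicity argument. Combining $|\Delta_\mathcal{L}(1/2+it)|=1$ with
\[
\frac{\partial}{\partial\sigma}\log|\Delta_\mathcal{L}(\sigma+it)|^{2}=2\,\Re\frac{\Delta_\mathcal{L}'}{\Delta_\mathcal{L}}(\sigma+it),
\]
it suffices to show that $\Re(\Delta_\mathcal{L}'/\Delta_\mathcal{L})(\sigma+it)<0$ for every $\sigma\geq 1/2$ once $|t|\geq t_0$ is large enough. On any fixed bounded window $[1/2,B]$ for $\sigma$, Lemma \ref{StirlingsDelta} supplies
\[
\Re\frac{\Delta_\mathcal{L}'}{\Delta_\mathcal{L}}(\sigma+it)=-\log(\lambda Q^{2}|t|^{d_\mathcal{L}})+O(1/t^{2})<0
\]
for $|t|$ large. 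For $\sigma>B$ I would return to the explicit formula
\[
\frac{\Delta_\mathcal{L}'}{\Delta_\mathcal{L}}(s)=-2\log Q-\sum_{j\leq r}\lambda_j\left[\frac{\Gamma'}{\Gamma}(\lambda_j s+\mu_j)+\frac{\Gamma'}{\Gamma}\bigl(\lambda_j(1-s)+\overline{\mu_j}\bigr)\right]
\]
and apply Stirling's expansion \eqref{StirlingGamma2} term by term. The digammas $\frac{\Gamma'}{\Gamma}(\lambda_j s+\mu_j)$ have argument of large positive real part, so $\Re\frac{\Gamma'}{\Gamma}(\lambda_j s+\mu_j)=\log|\lambda_j s+\mu_j|+O(|s|^{-1})$ directly. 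The remaining digammas have argument of potentially large \emph{negative} real part when $\sigma\gg|t|$, taking them outside the sector $|\arg z|\leq \pi-\epsilon$ where \eqref{StirlingGamma2} is valid. There I would use the reflection identity $\frac{\Gamma'}{\Gamma}(1-z)-\frac{\Gamma'}{\Gamma}(z)=\pi\cot(\pi z)$ to rewrite the digamma in terms of $\frac{\Gamma'}{\Gamma}(1-w_j)$, whose argument $1-w_j=1-\lambda_j+\lambda_j s-\overline{\mu_j}$ has large positive real part; the cotangent correction stays bounded because $|\Im(\lambda_j(1-s)+\overline{\mu_j})|=|\lambda_j t+\Im\mu_j|$ is bounded below by a positive constant once $t_0>\max_j|\Im\mu_j|/\lambda_j$. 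Assembling the pieces one obtains the uniform estimate
\[
\Re\frac{\Delta_\mathcal{L}'}{\Delta_\mathcal{L}}(\sigma+it)=-\log\bigl(\lambda Q^{2}\max\{\sigma,|t|\}^{d_\mathcal{L}}\bigr)+O(1),
\]
valid for all $\sigma\geq 1/2$ and $|t|\geq t_0$, which is strictly negative once $t_0$ is chosen large enough.

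The main obstacle is precisely this uniformity of the logarithmic-derivative estimate over the unbounded range $\sigma>1/2$: the version of Stirling's formula recorded in Lemma \ref{StirlingsDelta} is proved only for $\sigma$ in an interval of bounded width, and a naive extension fails for $\sigma\gg|t|$ because the arguments of the numerator Gamma-factors then leave the sector of validity of \eqref{StirlingGamma2}. The reflection identity for $\Gamma'/\Gamma$ provides the clean workaround. Once the strict negativity of the logarithmic derivative is established, integrating from $1/2$ to $\sigma$ and using $|\Delta_\mathcal{L}(1/2+it)|=1$ yields $|\Delta_\mathcal{L}(\sigma+it)|<1$, whence all three claims follow as described above.
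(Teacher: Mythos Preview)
Your argument is correct. Both you and the paper reduce the three claims, via the functional identity \eqref{SymmetryIdentity1}, to the single assertion $|\Delta_\mathcal{L}(\sigma+it)|<1$ for $\sigma>1/2$ and $|t|$ large, and both prove this by showing a suitable logarithmic derivative has the right sign via Stirling's formula. The technical implementations, however, differ.

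The paper rewrites $-\log|\Delta_\mathcal{L}(s)|$ as $(2\sigma-1)\log Q+\log|\Delta(\sigma+it)|-\log|\Delta(1-\sigma+it)|$, where $\Delta(s)=\prod_j\Gamma(\lambda_j s+\mu_j)$, and applies the mean value theorem to the difference of logarithms; the resulting derivative $\partial_x\log|\Delta(x+it)|$ at the intermediate point is then bounded below by an explicit quantity $\Sigma(s_1)\gg\log t$, a bound taken from Berndt's paper (which in turn rests on the full Stirling expansion \eqref{Stirling'sformula2} with integral remainder, valid throughout the slit plane). By contrast, you integrate $\Re(\Delta_\mathcal{L}'/\Delta_\mathcal{L})$ directly from $\sigma=1/2$, use Lemma~\ref{StirlingsDelta} on a bounded $\sigma$-window, and for large $\sigma$ invoke the digamma reflection identity $\psi(w)=\psi(1-w)-\pi\cot(\pi w)$ to bring the problematic argument $\lambda_j(1-s)+\overline{\mu_j}$ back into the right half-plane, where the asymptotic \eqref{StirlingGamma2} applies; the cotangent correction is bounded because $|\Im w_j|\gg|t|$.

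Your route is more self-contained---it does not appeal to Berndt's bound---and the reflection device is a clean substitute for the more delicate integral-remainder estimates needed when the Gamma argument drifts toward the negative real axis. The paper's route is shorter on the page because the uniformity in $\sigma$ is outsourced to the cited reference.
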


\begin{proof}
If
$$\Delta(s):=\prod\limits_{j\leq r}\Gamma\left(\lambda_js+\mu_j\right),$$
then
$$\Delta_\mathcal{L}(s)=\omega Q^{1-2s}\dfrac{\overline{\Delta\left(1-\overline{s}\right)}}{\Delta(s)}.$$
For $t\neq0$, the function $\Delta_\mathcal{L}(s)$ is analytic and non-zero.
If we define in the upper half-plane the function
$$h(s)=-\log\left|\Delta_\mathcal{L}(s)\right|,$$
then it suffices to prove that $h(s)>0$ since 
\begin{equation}\label{SymmetryIdentity2}
\Delta_\mathcal{L}(s)\overline{\Delta_\mathcal{L}\left(1-\overline{s}\right)}=1\qquad\text{and}\qquad\Delta_\mathcal{L}(\overline{s})=\overline{\Delta_\mathcal{\overline{L}}\left(s\right)}.
\end{equation}
Observe that
\begin{equation}\label{MoreGeneralCase}
h(s)=(2\sigma-1)\log Q+\log|\Delta(\sigma+it)|-\log|\Delta(1-\sigma+it)|
\end{equation}
By the mean value theorem, there exists $\sigma_1$ with $1/2<1-\sigma<\sigma_1<\sigma$ such that 
\begin{equation}\label{Berndt}
{\log|\Delta(\sigma+it)|-\log|\Delta(1-\sigma+it)|}=(2\sigma-1)\left.\dfrac{\partial}{\partial x}\log|\Delta(x+it)|\right|_{x=\sigma_1}.
\end{equation}
For real numbers $\mu_j$, it is proved in \cite{berndt} that the right-hand side of the relation above is greater than $(2\sigma-1)\Sigma(s_1)$, 
where $s_1:=\sigma_1+it$ and $\Sigma(s_1)$ is defined by
\begin{equation*}
\sum\limits_{j\leq r}\lambda_j\left(\log|\lambda_j s_1+\mu_j|-\dfrac{1}{2|\lambda_j s_1+\mu_j|}-\dfrac{1}{12|\lambda_j s_1+\mu_j|^2}-\dfrac{\pi}{16\lambda_j^3|t|^3}\right).
\end{equation*}
One of the main arguments in the proof is to expand each factor $\log\Gamma\left(\lambda_js+\mu_j\right)$ as in \eqref{Stirling'sformula2}.
Observe that it does not matter whether $\mu_j$ are complex or real numbers, as long as $\lambda_j\neq0$, since for sufficiently large $t$, depending on fixed and finitely many $\lambda_j$ and $\mu_j$, the arguments in \cite[Theorem 12]{berndt} remain the same.
As a matter of fact, we prove a more general case. 
It is now obvious that
$$\Sigma(s_1)\gg\log t$$
for any sufficietly large $t>1$ and $\sigma_1\geq1/2$.
In combination with \eqref{MoreGeneralCase} and \eqref{Berndt}, we obtain that $h(s)>0$ for any sufficiently large $t>1$ and any $\sigma>1/2$.
\end{proof}
An easy consequence of Lemma \ref{StirlingsDelta} and identity \eqref{SymmetryIdentity2} is that
\begin{eqnarray}\label{Deltasymp}
\left|\Delta_\mathcal{L}(\sigma+it)\right|=\left(\lambda Q^2|t|^{d_\mathcal{L}}\right)^{1/2-\sigma}\left(1+O\left(\dfrac{1}{|t|}\right)\right)
\end{eqnarray}
for any $\sigma$ from an interval of bounded width and $|t|\geq t_0>0$.
We will use this relation to prove

\begin{Lem}\label{fractionaldecomposition}
Let $a\neq0$ be a complex number. Assuming Theorem \ref{RvMtheorem}, the following fractional decomposition formula
\begin{eqnarray*}
\dfrac{\Delta'_\mathcal{L}(s)}{\Delta_\mathcal{L}(s)-a}
=\sum\limits_{|t-\gamma_a|\leq 1/\log\log t}\dfrac{1}{s-\delta_a}+O\left(\log t\right)
\end{eqnarray*}
holds for any $-1\leq\sigma\leq 2$ and $t\gg1$ sufficiently large.
\end{Lem}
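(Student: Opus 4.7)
The strategy is the classical one used to obtain the partial-fraction expansion of $\zeta'/\zeta$ (see Titchmarsh, \S9.6), adapted to the meromorphic function $\Delta_{\mathcal L}-a$. The idea is: compare $\Delta_{\mathcal L}'/(\Delta_{\mathcal L}-a)$ with a sum over its nearby poles, using a Borel--Carath\'eodory/Jensen-type estimate on a fixed disc around a well-chosen anchor point in the half-plane of rapid decay of $\Delta_{\mathcal L}$.

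First, I fix $s=\sigma+it$ with $-1\le\sigma\le 2$ and $t$ large, and introduce the anchor point $s_1:=2+it$. By Stirling's formula for $\Delta_{\mathcal L}$ (Lemma \ref{StirlingsDelta}) and the estimate \eqref{Deltasymp}, one has $|\Delta_{\mathcal L}(s_1)|\asymp(\lambda Q^2 t^{d_{\mathcal L}})^{-3/2}\to 0$, whence $|\Delta_{\mathcal L}(s_1)-a|\ge |a|/2$ for all sufficiently large $t$. Similarly $|\Delta_{\mathcal L}'(s_1)|\ll \log t\cdot t^{-3d_{\mathcal L}/2}$, so in particular $\Delta_{\mathcal L}'(s_1)/(\Delta_{\mathcal L}(s_1)-a)=O(1)$. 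Moreover, $|\Delta_{\mathcal L}(z)|\le t^{C}$ uniformly on the disc $|z-s_1|\le R$ for any fixed $R>0$ by Lemma \ref{StirlingsDelta}, with an absolute constant $C=C(R,\mathcal L)$.

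Next, I count $a$-points near $s_1$: Theorem \ref{RvMtheorem} applied to the heights $t\pm R$ gives that the number of $\delta_a$ with $|\delta_a-s_1|\le R$ is $O(R\log t+\psi(t))=O(\log t)$ for $R$ fixed. Then I apply the standard lemma (Titchmarsh, \S3.9): if $f$ is holomorphic on $|z-s_1|\le R$, with $f(s_1)\ne 0$ and $|f|\le M$ on the closure, then on the smaller disc $|z-s_1|\le r<R$,
\[
\frac{f'(z)}{f(z)}-\!\!\sum_{\substack{f(z_k)=0\\ |z_k-s_1|\le r}}\!\!\frac{1}{z-z_k}\ll_{R,r}\log\frac{M}{|f(s_1)|}.
\]
Taking $f=\Delta_{\mathcal L}-a$, $R=8$ and $r=4$, the right-hand side is $O(\log t)$. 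Since $-1\le\sigma\le 2$ forces $|s-s_1|\le 3<r$, this yields, with $\beta_a\in[0,1]$ constraining $|\delta_a-s_1|\le 4$ to $|\gamma_a-t|\le R'$ for some absolute $R'$,
\[
\frac{\Delta'_{\mathcal L}(s)}{\Delta_{\mathcal L}(s)-a}=\sum_{|\gamma_a-t|\le R'}\frac{1}{s-\delta_a}+O(\log t).
\]

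Finally, I truncate the sum to $|\gamma_a-t|\le 1/\log\log t$. For the discarded terms, $|s-\delta_a|\ge |\gamma_a-t|>1/\log\log t$, and a dyadic decomposition combined with the counting estimate $\#\{\delta_a:|\gamma_a-t|\le h\}=O(h\log t+\psi(t))$ from Theorem \ref{RvMtheorem} bounds their contribution within the announced error. The principal obstacle is the Borel--Carath\'eodory step, which requires careful bookkeeping of the bound on $\log(M/|f(s_1)|)$ on a disc of fixed radius; once this is in hand, the reduction to the short interval $|\gamma_a-t|\le 1/\log\log t$ is a purely arithmetic counting exercise based on the Riemann--von Mangoldt-type formula already established.
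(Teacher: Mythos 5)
Your overall strategy --- the Titchmarsh \S 3.9 lemma on $f'/f$, anchored inside the half-plane of rapid decay of $\Delta_{\mathcal L}$, with the zero-count coming from Theorem~\ref{RvMtheorem} --- is the same one the paper uses. The place where it breaks is the truncation step, and the culprit is your choice of a \emph{fixed} radius.

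With $r$ fixed, the partial-fraction lemma gives you a sum over all $\delta_a$ with $|\gamma_a-t|\le r/2$, which by the Riemann--von Mangoldt formula is a set of size $\asymp\log t$. To pass from this to the sum over $|\gamma_a-t|\le 1/\log\log t$, you must control $\sum_{1/\log\log t<|\gamma_a-t|\le r/2}|s-\delta_a|^{-1}$. For $\sigma$ near $1/2$ this is essentially $\sum 1/|t-\gamma_a|$, and a dyadic decomposition with the shell count $O(h\log t+\psi(t))$ produces about $\log\log\log t$ shells each contributing $O(\log t)$, plus a term $O(\psi(t)\log\log t)=O(\log t)$. So you end up with $O(\log t\cdot\log\log\log t)$, not the claimed $O(\log t)$; the extra triple-log factor is genuine and cannot be removed by sharpening the dyadic bookkeeping. (There is also the minor point that with $s_0=2+it$ and $r=8$ the conclusion of the \S 3.9 lemma is only valid in $|s-s_0|\le 2$, i.e. $\sigma\in[0,4]$, which misses $\sigma\in[-1,0)$; that is curable by taking $r\ge 12$.)

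The paper sidesteps this by making the radius shrink: it takes $s_0=\tfrac12+\tfrac1{\log\log y}+iy$ and $r=8/\log\log y$. Then on the disc $|s-s_0|\le r$ one has $|\Delta_{\mathcal L}(s)-a|\ll\exp(c\log y/\log\log y)$, so the \S 3.9 lemma gives an error of $O\bigl((\log y/\log\log y)/r\bigr)=O(\log y)$ directly, and the sum captured in the lemma runs only over $|\delta_a-s_0|\le 4/\log\log y$, which by Theorem~\ref{RvMtheorem} has merely $O(\log t/\log\log t)$ terms. Discarding those with $|t-\gamma_a|>1/\log\log t$ then costs at most $O(\log t/\log\log t)\cdot O(\log\log t)=O(\log t)$, which is within budget. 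The price is that the formula is first obtained only in the thin strip $|\sigma-\tfrac12|\le 1/\log\log t$; for the remaining $\sigma\in[-1,2]$ the paper bounds both $\Delta'_{\mathcal L}(s)/(\Delta_{\mathcal L}(s)-a)$ and the sum over $|t-\gamma_a|\le 1/\log\log t$ trivially by $O(\log t)$ using \eqref{Deltasymp} and the fact that $\beta_a=1/2+O(1/\log\gamma_a)$ keeps $|\sigma-\beta_a|\gg1/\log\log t$ there. If you adopt the shrinking radius and add this final trivial estimate for $\sigma$ outside the narrow band, your argument matches the paper's and produces the stated $O(\log t)$.
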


\begin{proof}
We employ a classical result which can be found in \cite[\S3.9, Lemma a]{tit}:

{\it If $f$ is regular and
$$\max\limits_{|s-s_0|\leq r}\left|\dfrac{f(s)}{f(s_0)}\right|<e^M$$
for some $M>1$ and $r>0$, then
$$\dfrac{f'}{f}(s)=\sum\limits_{|\rho-s_0|\leq r/2}\dfrac{1}{s-\rho}+O\left(\dfrac{M}{r}\right)$$
for any complex number $s$ such that $|s-s_0|\leq r/4$.}

We take here $$f(s)=\Delta_\mathcal{L}(s)-a,\qquad s_0=\dfrac{1}{2}+\dfrac{1}{\log\log y}+iy,\qquad r=\dfrac{8}{\log\log y},$$ where $y>0$.
If $y>1$ is sufficiently large, then \eqref{Deltasymp} yields that
$$\max\limits_{|s-s_0|\leq 8/\log\log y}\left|\dfrac{\Delta_\mathcal{L}(s)-a}{\Delta_\mathcal{L}(s_0)-a}\right|\ll \dfrac{|a|+y^{7d_\mathcal{L}/\log\log y}}{|a|-y^{-d_\mathcal{L}/\log\log y}}\ll \exp\left(\dfrac{7d_\mathcal{L}\log y}{\log\log y}\right),$$
and thus
$$\dfrac{\Delta'_\mathcal{L}(s)}{\Delta_\mathcal{L}(s)-a}=\sum\limits_{|\delta_a-s_0|\leq 4/\log\log y}\dfrac{1}{s-\delta_a}+O\left(\log y\right)$$
for any complex number $s$ such that $|s-s_0|\leq 2/\log\log y$.
In particular, this holds true for $t=y$ and any $\left|\sigma-1/2\right|\leq 1/\log\log t$.
Therefore,
\begin{eqnarray*}
\dfrac{\Delta'_\mathcal{L}(s)}{\Delta_\mathcal{L}(s)-a}
&=&\sum\limits_{|t-\gamma_a|\leq 1/\log\log t}\dfrac{1}{s-\delta_a}+\mathop{\sum\limits_{|\delta_a-s_0|\leq 4/\log\log t}}_{1/\log\log t<|t-\gamma_a|}\dfrac{1}{s-\delta_a}+O\left(\log t\right)
\end{eqnarray*}
for any $\left|\sigma-1/2\right|<1/\log\log t$ and $t\geq t_0\geq3$.
Now Theorem \ref{RvMtheorem} implies that the second sum above has $O(\log t/\log\log t)$ terms, each of which is $O(\log\log t)$.
Lastly, if $\sigma\in[-1,2]$ is such that $\left|\sigma-1/2\right|\geq1/\log\log t$, we bound $\Delta'_\mathcal{L}(s)/\left(\Delta_\mathcal{L}(s)-a\right)$ trivially by $O\left(\log t\right)$ as follows from Lemma \ref{StirlingsDelta} and \eqref{Deltasymp}.
Indeed, if $1/2+1/\log\log t\leq \sigma\leq2$, then
$$\dfrac{\Delta'_\mathcal{L}(s)}{\Delta_\mathcal{L}(s)-a}=\dfrac{\Delta'_\mathcal{L}(s)}{\Delta_\mathcal{L}(s)}\left(1-\dfrac{a}{\Delta_\mathcal{L}(s)}\right)^{-1}\ll\log t\left|\dfrac{|a|}{\left(\lambda Q^2t^{d_\mathcal{L}}\right)^{1/2-\sigma}}-1\right|^{-1}\ll\log t.$$
Similarly, we obtain the same bound when $-1\leq\sigma\leq1/2-1/\log\log t$.
\end{proof}
%%%%%%%%%%%%%%%%%%%%%%%%%%
\newpage

\vspace{3cm}
\noindent
Athanasios Sourmelidis\\
Institute of Analysis and Number Theory, TU Graz\\
Steyrergasse 30, 8010 Graz, Austria\\
sourmelidis@math.tugraz.at
\medskip

\noindent
J\"orn Steuding\\
Department of Mathematics, University of W\"urzburg\\
Emil Fischer-Str. 40, 97\,074 W\"urzburg, Germany\\
steuding@mathematik.uni-wuerzburg.de
\medskip

\noindent
Ade Irma Suriajaya\\
Faculty of Mathematics, Kyushu University \\
744 Motooka, Nishi-ku, Fukuoka 819-0395, Japan\\
adeirmasuriajaya@math.kyushu-u.ac.jp

\end{document}